\numberwithin{equation}{section}
\newtheorem{theorem}{Theorem}[section]
\newtheorem{lemma}[theorem]{Lemma}
\newtheorem{corollary}[theorem]{Corollary}
\theoremstyle{definition}
\newtheorem*{remark}{Remark}
\newtheorem*{ack}{Acknowledgement}
\theoremstyle{remark}
\newcounter{thmenumerate}
\newcounter{xenumerate}   
\newcommand{\refT}[1]{Theorem~\ref{#1}}
\newcommand{\refC}[1]{Corollary~\ref{#1}}
\newcommand{\refL}[1]{Lemma~\ref{#1}}
\newcommand{\refS}[1]{Section~\ref{#1}}
\xdef\klockan{\the\count1.0\the\count255}
\xdef\klockan{\the\count1.\the\count255}\fi
\newcommand{\sumil}{\sum_{i=1}^\ell}
\newcommand{\sumjl}{\sum_{j=1}^\ell}
\newcommand{\prodil}{\prod_{i=1}^\ell}
\newcommand{\suml}{\sum_{\ell=0}^\infty}
\newcommand{\summi}{\sum_{m=1}^\infty}
\newcommand\set[1]{\ensuremath{\{#1\}}}
\newcommand\bigset[1]{\ensuremath{\bigl\{#1\bigr\}}}
\newcommand\xpar[1]{(#1)}
\newcommand\bigpar[1]{\bigl(#1\bigr)}
\newcommand\Bigpar[1]{\Bigl(#1\Bigr)}
\newcommand\lrpar[1]{\left(#1\right)}
\newcommand\xcpar[1]{\{#1\}}
\newcommand\biggabs[1]{\biggl|#1\biggr|}
\def\rompar(#1){\textup(#1\textup)}    
\def\xexp(#1){e^{#1}}
\newcommand\ntoo{\ensuremath{{n\to\infty}}}
\newcommand\bmin{\wedge}
\newcommand\ie{i.e.\spacefactor=1000}
\newcommand\eg{e.g.\spacefactor=1000}
\newcommand\viz{viz.\spacefactor=1000}
\newcommand\ii{\mathrm{i}}
\newcommand{\tend}{\longrightarrow}
\newcommand\dto{\overset{\mathrm{d}}{\tend}}
\newcommand\eqd{\overset{\mathrm{d}}{=}}
\newcommand\bbR{\mathbb R}
\newcommand\bbC{\mathbb C}
\newcommand\bbZ{\mathbb Z}
\newcounter{CC}
\newcommand{\CC}{\stepcounter{CC}\CCx} 
\newcommand{\CCx}{C_{\arabic{CC}}}     
\newcommand{\CCdef}[1]{\xdef#1{\CCx}}     
\newcounter{cc}
\newcommand{\cc}{\stepcounter{cc}\ccx} 
\newcommand{\ccx}{c_{\arabic{cc}}}     
\newcommand{\ccdef}[1]{\xdef#1{\ccx}}     
\newcommand\E{\operatorname{\mathbb E{}}}
\renewcommand\P{\operatorname{\mathbb P{}}}
\newcommand\Var{\operatorname{Var}}
\newcommand\Ge{\operatorname{Ge}}
\newcommand\gb{\beta}
\newcommand\gd{\delta}
\newcommand\gD{\Delta}
\newcommand\gf{\varphi}
\newcommand\gam{\gamma}
\newcommand\gs{\sigma}
\newcommand\gss{\sigma^2}
\newcommand\eps{\varepsilon}
\newcommand\cT{{\mathcal T}}
\def\[#1]{[\![#1]\!]}
\renewcommand{\=}{:=}
\newcommand\lhs{left hand side}
\newcommand\rhs{right hand side}
\newcommand{\GW}{Galton--Watson}
\newcommand{\GWp}{\GW{} process}
\newcommand{\GWt}{\GW{} tree}
\newcommand{\cGWt}{conditioned \GW{} tree}
\newcommand{\tn}{T_n}
\newcommand{\tnx}{\tn^*}
\newcommand{\hT}{\widehat T}
\newcommand{\htn}{\hT_n}
\newcommand{\fn}{f_n}
\newcommand{\hn}{h_n}
\newcommand{\vw}{v\bmin w}
\newcommand{\gdxx}[2]{\gD(#1,#2)}
\newcommand{\gdbd}{\gdxx{\gb}{\gd}}
\newcommand{\gdbdi}{\gdxx{\gb_1}{\gd_1}}
\newcommand{\gdbdii}{\gdxx{\gb_1}{\gd_2}}
\newcommand{\gdbr}{\gdxx{\gb}{\rho}}
\newcommand{\ogdbd}{\overline{\gdbd}}
\newcommand{\ogdbdx}{\ogdbd\setminus\set{1}}
\newcommand{\gdbde}{\gdbd\cap\be}
\newcommand{\sumvtn}{\sum_{v\in\tn}}
\newcommand{\sumvwtn}{\sum_{v,w\in\tn}}
\newcommand{\sumvt}{\sum_{v\in\cT}}
\newcommand{\sumvwt}{\sum_{v,w\in\cT}}
\newcommand{\gsy}{\gs_\eta}
\newcommand{\gssy}{\gss_\eta}
\newcommand{\etae}{\eta_e}
\newcommand{\oo}{o}
\newcommand{\lx}[1]{L_{#1}}
\newcommand{\lv}{\lx{v}}
\newcommand\cor{C_0(\bbR)}
\newcommand{\bXn}{X_n}
\newcommand{\xn}{\bXn}
\newcommand{\doo}{d_\oo}
\newcommand{\phifz}{\Phi\xpar{F(z)}}
\newcommand{\yyxx}[2]{\widetilde P_{#1,#2}}
\newcommand{\yylm}{\yyxx{\ell}m}
\newcommand{\oqq}{o\bigpar{|z-1|^{1/2}}}
\newcommand{\be}{B(1,\eps)}
\newcommand{\br}{B_\rho}
\newcommand{\ba}{\overline{A}}
\newcommand{\ce}{C_\eps}
\newcommand{\psint}{\Psi(n,t)}
\newcommand{\hk}{^{(k)}}
\newcommand{\tauq}{T}
\newcommand\ise{_{\text{\sc ise}}} 
\newcommand\fise{f\ise}
\newcommand\qq{^{1/2}}
\newcommand\qqw{^{-1/2}}
\newcommand\qw{^{-1}}
\newcommand\qww{^{-2}}
\newcommand\qwwi{^{-3/4}}
\newcommand\qqqq{^{1/4}}
\newcommand\qc{^{3/2}}
\newcommand\qcw{^{-3/2}}
\newcommand\wz{Z}
\newcommand\PROB{\P}
\newcommand\EXP{\E}
\newcommand\IND[1]{\boldsymbol1_{[#1]}}
\newcommand\sa{\Sigma_1(n_1,n_2)}
\newcommand\sbm{\Sigma_2(m)}
\newcommand\sbx{\Sigma_2(n-1)}
\newcommand\scl{\Sigma_3(\ell)}
\newcommand\scx[1]{\Sigma_3(#1)}
\newcommand\gfeta{\gf_\eta}
\newcommand\REM[1]{{\raggedright\texttt{[#1]}\par\marginal{XXX}}}
\newcommand\urladdrx[1]{\urladdr{\def~{\~{}}#1}}
\begin{document}
\title[Distances between pairs of vertices and vertical profile]
{Distances between pairs of vertices and vertical profile
in conditioned Galton--Watson trees}

\date{December 17, 2008} 

\author{Luc Devroye}
\address{School of Computer Science, McGill University, 3480
  University Street, Montreal, Canada H3A 2K6}
\email{lucdevroye@gmail.com} 
\urladdrx{http://cg.scs.carleton.ca/~luc/}

\author{Svante Janson}
\address{Department of Mathematics, Uppsala University, PO Box 480,
SE-751~06 Uppsala, Sweden}
\email{svante.janson@math.uu.se}
\urladdrx{http://www.math.uu.se/~svante/}

\keywords{Random Galton--Watson tree, paths of given length, vertical
  profile, probabilistic analysis of algorithms, branching process.
}
\subjclass[2000]{60C05; 05C05}
\thanks{\emph{CR Categories.} 3.74, 5.25, 5.5.} 

\thanks{The first author's research was sponsored by NSERC Grant A3456 and FQRNT Grant  90-ER-0291}

\begin{abstract} 
We consider a \cGWt{} and
prove an estimate of the number of pairs of vertices 
with a given distance, or, equivalently,
the number of paths of a given length.

We give two proofs of this result, one probabilistic and the other
using generating functions and singularity analysis.

Moreover,
the second proof yields a more general estimate for generating
functions, which is used to prove a conjecture by
Bousquet--M\'elou and Janson \cite{SJ185}, 
saying that the vertical
profile of a randomly labelled \cGWt{} converges in distribution, 
after suitable normalization, 
to the density of ISE (Integrated Superbrownian Excursion).
\end{abstract}

\maketitle

\section{Introduction and results}\label{S:intro}

Let $\tn$ be a \cGWt, \ie, the random rooted tree $\cT$ obtained as the
family tree of a \GWp{} with some given offspring distribution $\xi$,
conditioned on the number of vertices $|\cT|=n$. We will always assume
that 
\begin{align}\label{Axi}
\E\xi=1 \qquad \text{and} \qquad
0<\gss\=\Var\xi<\infty
\end{align}
In other words, the \GWp{} is critical and with finite variance, and
$\P(\xi=1)<1$. 
(Note that this entails $0<\P(\xi=0)<1$.)
It is well-known that this assumption is without essential loss of
generality, and that the resulting random trees are essentially the
same as the simply generated families of trees introduced by Meir and
Moon \cite{MM}.
The importance of this construction 
lies in that 
many combinatorially interesting random trees are of this type,
for example 
random plane (= ordered) trees, 
random unordered labelled trees (Cayley trees), 
random binary trees, 
and (more generally) random $d$-ary trees. 
For further examples see \eg{}
Aldous \cite{AldousII} and Devroye \cite{Devroye}.

We consider 
only $n$ such that $\tn$ exists, \ie,
such that $\P(|\cT|=n)>0$.
The \emph{span} of $\xi$ is defined to be the largest integer $d$ such
that $\xi\in d\bbZ$ a.s. If the span of $\xi$ is $d$, then $\tn$ exists
only for $n\equiv 1 \pmod d$, and it exists for all large such $n$.

We consider in this paper two types of properties of $\tn$ that turn
out to have proofs using a common argument.
First, for an arbitrary rooted tree $\tau$, 
let $P_k(\tau)$, $k\ge1$, be the number of (unordered) pairs of
vertices \set{v,w} in $\tau$ such that the distance $d(v,w)=k$;
equivalently, $P_k(\tau)$ is the number of paths of length $k$ in $\tau$.
Our first result is an estimate, uniform in all $k$ and $n$, 
of the expectation of this number $P_k(\tn)$ for a conditioned
Galton--Watson tree $\tn$.

We let in this paper $C_1,C_2$ and $c_1,c_2$ denote various positive constants
that may depend on (the distribution of) $\xi$, and sometimes later
$\eta$ introduced below, but not on $n$, $k$ and other variables
unless explicitly stated. 
Recall that we tacitly assume \eqref{Axi}. 

\begin{theorem}
  \label{T1}
There exists a constant $\CC$ 
such that for all 
$k\ge1$ and $n\ge1$,
$\E P_k(\tn)\le \CCx nk$.
\end{theorem}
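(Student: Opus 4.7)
The plan is to use generating functions. Writing $\tilde P_k(\tau) := 2P_k(\tau)$ for the number of \emph{oriented} paths of length $k$ in $\tau$, setting $T(z) := \E z^{|\cT|}$ (so that $T(z) = z\Phi(T(z))$, with $\Phi$ the pgf of $\xi$) and $B_k(z) := \E\bigl[\tilde P_k(\cT)\,z^{|\cT|}\bigr]$, conditioning on $|\cT|=n$ gives
\[
\E P_k(\tn) = \frac{[z^n] B_k(z)}{2\,[z^n] T(z)}.
\]
Since $[z^n] T(z) \asymp n\qcw$ on the relevant residue class $n\equiv 1\pmod{d}$, by the classical local limit theorem for critical \GWt{}s, it suffices to show $[z^n] B_k(z) \le C\,k\,n\qqw$ uniformly in $k$ and $n$.

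To obtain $B_k(z)$ explicitly, I decompose each oriented path $(v_0,\dots,v_k)$ in $\cT$ by its lowest common ancestor $u = v_i$, with $0 \le i \le k$. In the boundary cases $i=0$ or $i=k$ the path is a descending chain from $u$; in the middle cases $0<i<k$ the path exits $u$ through two distinct children. The ambient tree $\cT$ then splits into: the ancestor chain from the root of $\cT$ down to $u$; the distinguished path itself; and independent \GWt{} subtrees rooted at each non-path child along these chains. Setting $g(z) := z\Phi'(T(z))$, each vertex on the chains with one distinguished child contributes a factor $g(z)$, the vertex $u$ in the middle case (two distinguished children) contributes $z\Phi''(T(z))$, and the ancestor chain summed over its length gives $1/(1-g(z))$. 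Collecting, for $k\ge 2$,
\[
B_k(z) = \frac{2\,g(z)^k T(z)}{1-g(z)} + (k-1)\cdot\frac{z\,\Phi''(T(z))\,g(z)^{k-2}\,T(z)^2}{1-g(z)}.
\]

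Singularity analysis then finishes the argument. At the dominant singularity $z=1$ one has $T(1) = g(1) = 1$ and $\Phi''(T(1)) = \gss$; combined with the standard expansions $1 - T(z) \sim \gs\qw\sqrt{2(1-z)}$ and $1 - g(z) \sim \gs\sqrt{2(1-z)}$, this gives $B_k(z) = O\bigl(k(1-z)\qqw\bigr)$ near $z=1$, and a Flajolet--Odlyzko transfer theorem yields $[z^n] B_k(z) = O(k n\qqw)$. The main obstacle will be making this estimate \emph{uniform} in $k$: near $z=1$ uniformity is clear since $g(z)^{k-2}\to 1$, but elsewhere one must bound $B_k$ on the boundary of a suitable $\Delta$-domain around $z=1$, exploiting $|g(z)|<1$ there so that $g(z)^{k-2}$ decays geometrically in $k$; this requires verifying that $T$ (hence $g$) extends analytically to such a domain, which follows from the finite-variance hypothesis~\eqref{Axi} after handling the span-$d$ case by working with $T$ as a function of $z^d$. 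The trivial bound $P_k(\tn)\le\binom{n}{2}$ handles the regime $k\gtrsim\sqrt{n}$, so the uniform singularity estimate is only needed in a moderate range of $k$.
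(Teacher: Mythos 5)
Your derivation is correct in substance, but it is best read as a univariate specialization of the paper's second (generating-function) proof rather than a genuinely different route: your $B_k(z)$ is exactly $\sum_{\ell+m=k}[x^\ell y^m]H(z,x,y)$ for the function $H$ of Section~\ref{Sgen2} (extracting coefficients in \eqref{yg}--\eqref{yh} reproduces precisely your two terms, with $g(z)=z\Phi'(F(z))$), and the analytic input you need --- continuation of $F$ to a domain $\gdbd$, the expansion \eqref{a5}, the lower bound $|1-z\Phi'(F(z))|\ge c|1-z|^{1/2}$, and $|z\Phi'(F(z))|\le1$ near $1$ together with $|z\Phi'(F(z))|$ bounded away from $1$ off a neighbourhood of $1$ --- is exactly the content of \refL{Lx}, namely \eqref{sofie}, \eqref{samuel} and the compactness step. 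What your specialization buys is that you never need the truly bivariate estimate \eqref{julie} nor \refT{Tgen2}; what you lose is that stronger statement, which the paper needs anyway for the profile/ISE result \refT{T2}. Note also that the paper has a second, purely probabilistic proof via fringe subtrees (Section~\ref{Spf1}) which your argument does not resemble at all.

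Two points in your sketch need repair. First, the uniformity near $z=1$ is not ``clear since $g(z)^{k-2}\to1$'': pointwise convergence of $g(z)$ to $1$ says nothing about $\sup_k|g(z)|^{k}$ on the part of the contour near $1$, and if $|g(z)|$ exceeded $1$ there the factor $g(z)^k$ would destroy uniformity already for $k$ of order $\sqrt n$. What you must prove is $|g(z)|\le1$ on $\gdbd$ near $1$; this is true and is the paper's \eqref{samuel}: by \eqref{david}, $g(z)=1-\sqrt2\,\gs\sqrt{1-z}+\oqq$, and in $\gdbd$ one has $|\arg(1-z)|<\pi/2+\gb$, so $\Re\sqrt{1-z}\ge c|1-z|^{1/2}$ for small $\gb$, whence $|g(z)|<1$. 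Combined with \eqref{sofie} and the boundedness of $\Phi''$ on the closed unit disc, this makes your bound $|B_k(z)|\le Ck|1-z|^{-1/2}$ uniform in $k$ on all of $\gdbd$, and a transfer theorem with uniformity in the parameter (e.g.\ \cite[Lemma IX.2]{FS}, applied to $B_k/k$) finishes. Second, your cutoff is wrong: $P_k(\tn)\le\binom n2$ gives $\E P_k(\tn)\le Cnk$ only for $k\gtrsim n$, not $k\gtrsim\sqrt n$; fortunately, once $|g|\le1$ on the whole domain is established, no splitting in $k$ is needed at all. (Also, for span $d>1$ it is not $T$ but $F(z)/z$ that is a function of $z^d$; the standard modification near the $d$-th roots of unity, as indicated before \refL{Lx}, is what is required, and the case $k=1$, where your formula has the single term $2g(z)F(z)/(1-g(z))$, is handled by the same bound.)
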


One way to interpret this result is that the expected number of
vertices of distance $k$ from a randomly chosen vertex in $\tn$ is
$O(k)$. In other words, if $\tnx$ is $\tn$ randomly rerooted, and
$\wz_k(\tau)$ is the number of vertices of distance $k$ from the root in
a rooted tree $\tau$, then the following holds.

\begin{corollary}
  \label{C1}
$\E \wz_k(\tnx)=O(k)$, uniformly in all $k\ge1$ and $n\ge1$.
\end{corollary}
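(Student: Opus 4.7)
The plan is to derive Corollary \ref{C1} from Theorem \ref{T1} by a straightforward double-counting identity relating $P_k$ to the number of vertices at distance $k$ from a uniformly chosen root.

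First, I would unpack the definition of $\tnx$. Conditional on the tree $\tn$, the rerooted tree $\tnx$ is obtained by picking a vertex $V$ uniformly at random from the $n$ vertices of $\tn$ and declaring it the root; the quantity $\wz_k(\tnx)$ is then just the number of vertices of $\tn$ at graph distance $k$ from $V$, a purely graph-theoretic quantity that is insensitive to any ordering or orientation.

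Next comes the key identity. For any fixed rooted tree $\tau$ on $n$ vertices, summing the count of vertices at distance $k$ over all possible choices of root vertex gives
\begin{equation*}
\sum_{v\in\tau}\bigl|\{w\in\tau : d(v,w)=k\}\bigr| \;=\; 2P_k(\tau),
\end{equation*}
since every unordered pair $\set{v,w}$ with $d(v,w)=k$ is counted exactly twice (once with $v$ as root and $w$ as the target, and once the other way around). Dividing by $n$ and taking the conditional expectation over the uniform choice of $V$ yields
\begin{equation*}
\E\bigl[\wz_k(\tnx)\bigm|\tn\bigr] \;=\; \frac{2P_k(\tn)}{n}.
\end{equation*}

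Finally, taking expectation over $\tn$ and applying Theorem \ref{T1},
\begin{equation*}
\E\wz_k(\tnx) \;=\; \frac{2\,\E P_k(\tn)}{n} \;\le\; \frac{2\CCx nk}{n} \;=\; 2\CCx k,
\end{equation*}
uniformly in $k\ge1$ and $n\ge1$, which is exactly the claim. There is no real obstacle here beyond recognizing the correct double-counting identity; the corollary is essentially a one-line reformulation of Theorem \ref{T1}, so all the hard work lies in the theorem itself.
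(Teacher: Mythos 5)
Your proof is correct and is exactly the argument the paper has in mind: the corollary is presented there as a reformulation of Theorem~\ref{T1} via the same averaging over a uniformly chosen root, with each unordered pair counted twice. Nothing is missing.
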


This can be compared to \cite[Theorem 1.13]{SJ167}, which shows that
\begin{equation}
\label{wk}  
\E \wz_k(\tn)=O(k), 
\end{equation}
again uniformly in $k$ and $n$.
Note that in the special case when $\tn$ is a random (unordered)
labelled tree, $\tnx$ has the same distribution, so \refC{C1} reduces
to \eqref{wk}. However, in general, a randomly rerooted \cGWt{} is not
a \cGWt. 

\begin{remark}
  The emphasis is on uniformity in both $k$ and $n$. If we, on the
  contrary, fix $k$ and consider limits as \ntoo, we have 
$\E \wz_k(\tn)\to1+k\gss$, see Meir and Moon \cite{MM} and Janson
  \cite{SJ167,SJ188}. It is shown in \cite{SJ188} that the sequence
  $\E \wz_k(\tn)$ is not always monotone in $n$.
\end{remark}

We give a probabilistic proof of \refT{T1}, and thus of \refC{C1} too, 
in \refS{Spf1}. 

We also give a second proof by first proving 
a corresponding estimate for the generating function.
(We present two different proofs, since we find both
methods interesting, and both methods yield 
as intermediary steps in the proofs
other results that we find interesting.)
Let $\fn(z)$ be the generating function defined by
\begin{equation*}
  \fn(z)
\=\sum_{k=1}^\infty \E P_k(\tn)\,z^k
.
\end{equation*}

We will use standard singularity analysis, see \eg{} 
Flajolet and Sedgewick \cite{FS},
and define the domain, for $0<\gb<\pi/2$ and $\gd>0$,
\begin{equation*}
  \gdbd\=\set{z\in\bbC:|z|<1+\gd,\, z\neq1,\, |\arg(z-1)|>\pi/2-\gb}.
\end{equation*}
Note that $|\arg(z-1)|>\pi/2-\gb$ is equivalent to 
$|\arg(1-z)|<\pi/2+\gb$.

\begin{theorem}
  \label{Tgen1}
For every $\xi$ there exist positive constants $\CC$, $\gb$, $\gd$ such
that for all $n\ge1$, $\fn$ extends to an analytic function in $\gdbd$
with
\begin{equation}
  \label{fn}
  |\fn(z)| \le \CCx n|1-z|\qww,
\qquad z\in\gdbd.
\end{equation}
\end{theorem}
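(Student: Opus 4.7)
Since $f_n$ is a polynomial in $z$ of degree at most $n-1$ (paths in $\tn$ have length at most $n-1$), it is trivially entire, so the content of the theorem is the quantitative uniform bound. The first step is to derive a closed form for the bivariate generating function
\begin{equation*}
\tilde f(w,z)\=\sum_{k\ge 1}\E\bigl[P_k(\cT)w^{|\cT|}\bigr]z^k.
\end{equation*}
Classifying each pair $\set{v,w}\subset\cT$ by its lowest common ancestor $u$ and applying the standard \GW{} marked-vertex identity $\E\sum_{u\in\cT}g(\tau_u)w^{|\cT|}=\frac{wF'(w)}{F(w)}\E[g(\cT)w^{|\cT|}]$ together with the well-known recursion $R_k(w)\=\E[\wz_k(\cT)w^{|\cT|}]=F(w)(w\Phi'(F(w)))^k$, one obtains
\begin{equation*}
\tilde f(w,z)=\frac{wF'(w)}{F(w)}\Bigl[S(w,z)-F(w)+\tfrac{1}{2}w\Phi''(F(w))\,z^2 S(w,z)^2\Bigr],
\end{equation*}
where $S(w,z)\=F(w)/\bigl(1-zw\Phi'(F(w))\bigr)=\sum_{k\ge 0}R_k(w)z^k$, $\Phi$ is the pgf of $\xi$, and $F(w)=w\Phi(F(w))$ is the pgf of $|\cT|$. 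Since $f_n(z)=[w^n]\tilde f(w,z)/[w^n]F(w)$ and $[w^n]F(w)\asymp n^{-3/2}$ by the classical singularity analysis of $F$, the theorem reduces to establishing $|[w^n]\tilde f(w,z)|\le Cn^{-1/2}|1-z|^{-2}$ uniformly in $z\in\gdbd$.

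The second step is singularity analysis of $\tilde f(w,z)$ in $w$. With $\gamma\=\sqrt{2/\gss}$ and $c\=\gss\gamma$, the expansions $F(w)=1-\gamma\sqrt{1-w}+O(|1-w|)$, $wF'(w)/F(w)\sim\gamma/(2\sqrt{1-w})$, and
\begin{equation*}
1-zw\Phi'(F(w))=(1-z)+cz\sqrt{1-w}+O(|1-w|)
\end{equation*}
show that $\tilde f$ has a $\sqrt{1-w}$-type singularity at $w=1$, with the dominant $S^2$ piece behaving like $c'z^2/\bigl(\sqrt{1-w}\,((1-z)+cz\sqrt{1-w})^2\bigr)$. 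To extract $[w^n]$, I would apply Cauchy's formula on a standard Hankel/$\Delta$-type contour in $w$, namely the boundary of $\gdxx{\gb'}{\gd'}$ closed by a small circular indentation of radius $\asymp 1/n$ around $w=1$. On the outer circular part $|w|=1+\gd'$ the integrand decays exponentially in $n$ because of the factor $w^{-n-1}$; on the indentation and the two straight segments, the square-root singularity integrated at the $1/n$ scale produces the $n^{-1/2}$ factor, while the bounded factor $|(1-z)+cz\sqrt{1-w}|^{-2}$ produces the $|1-z|^{-2}$.

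The main obstacle is to choose $\gb,\gd,\gb',\gd'$ so that the key factor $(1-z)+cz\sqrt{1-w}$ stays bounded below by a constant multiple of $|1-z|$ for every $z\in\gdbd$ and every $w$ on the contour. Its formal zero in $w$ lies at $w\approx 1-(1-z)^2/c^2$, which sits inside a standard $\Delta$-domain when $|1-z|$ is small, so the contour must bypass this ``ghost'' pole. The restriction $|\arg(1-z)|<\pi/2+\gb$ confines $1-z$ to a sector, and picking $\gb$ and $\gd'$ small enough arranges that $\arg(1-z)$ and $\arg(-cz\sqrt{1-w})$ are never opposite along the contour, yielding a uniform lower bound of the form $|(1-z)+cz\sqrt{1-w}|\ge c''(|1-z|+|\sqrt{1-w}|)$. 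Balancing this geometric constraint against the competition between the scales $|1-z|$ and $1/\sqrt{n}$, and controlling the subleading terms in the expansions of $F$ and $\Phi$, is where the argument requires the most care.
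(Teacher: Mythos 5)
Your outline is correct, and its analytic heart coincides with the paper's proof: an explicit closed form whose only dangerous factor is the denominator $1-zw\Phi'(F(w))$, a lower bound of order $|1-z|$ for that factor on a $\Delta$-domain obtained from an angle argument (this is exactly inequality \eqref{julie} of \refL{Lx}, with \eqref{sofie} handling your prefactor $wF'(w)/F(w)=1/(1-w\Phi'(F(w)))$), and a transfer, uniform in the secondary variable, of the resulting bound $O\bigl(|1-w|^{-1/2}|1-z|^{-2}\bigr)$ to coefficients $O(n^{-1/2})|1-z|^{-2}$ — which the paper gets by citing Lemma IX.2 of Flajolet--Sedgewick rather than by an explicit Hankel contour. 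What you do differently is the derivation of the closed form: the paper conditions on the root degree, keeps the two legs $d(v,\vw)$ and $d(w,\vw)$ separate in the trivariate $H$ of \eqref{yh}, proves the stronger \refT{Tgen2}, and deduces \refT{Tgen1} from $\hn(z,z)=n+2\fn(z)$; you classify pairs by their last common ancestor and use the fringe/marked-vertex identity, obtaining the univariate $\tilde f(w,z)$ directly — your formula is correct and equals $\tfrac12\bigl(H(w,z,z)-wF'(w)\bigr)$, as one checks from $G=F/(1-zw\Phi'(F))$. Your route is a bit shorter if one wants only \refT{Tgen1} and \refT{T1}, and it is in effect the generating-function analogue of the probabilistic proof in \refS{Spf1} (paths through the root of the fringe subtree at a random vertex); the paper's detour through $H$ is not wasted, though, since the bivariate bound of \refT{Tgen2} is what \refL{L0} and \refT{T2} actually require.

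A few points need shoring up in a full write-up. First, the lower bound on $|1-zw\Phi'(F(w))|$ (hence boundedness and analyticity of $\tilde f(\cdot,z)$) is needed for \emph{all} $w$ on and inside your contour and \emph{all} $z\in\gdbd$, not only for $w$ near $1$ and $z$ near $1$: the claimed exponential decay on the outer arc $|w|=1+\gd'$ presupposes it, and Cauchy's formula presupposes analyticity in the enclosed region. The regimes $|z-1|\ge\eps$ or $|w-1|\ge\eps$ are where the paper's compactness argument in \refL{Lx} comes in (e.g. $|\Phi'(F(w))|\le \ce<1$ away from $w=1$, after shrinking $\gd'$ so that $(1+\gd)(1+\gd')\ce<1$); you should include the analogous step. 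Second, your phrase that $\arg(1-z)$ and $\arg(-cz\sqrt{1-w})$ are ``never opposite'' is backwards: the dangerous configuration is when they nearly \emph{coincide}, i.e.\ when $1-z$ and $cz\sqrt{1-w}$ are nearly antiparallel; the angle bookkeeping ($|\arg(1-z)|<\pi/2+\gb$, $|\arg\sqrt{1-w}|\le\pi/4+\gb'/2$, $\arg z$ small) keeps their angle below $\pi$ by a fixed margin, which does yield your stated bound, so this is a slip of wording, not of substance. Finally, under only $\E\xi^2<\infty$ the remainders in your expansions are $o(|1-w|^{1/2})$ rather than $O(|1-w|)$ (which is all you need), and the asymptotics $\P(|\cT|=n)\asymp n^{-3/2}$ hold along the admissible residue class determined by the span, as the paper notes.
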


By standard singularity analysis (\ie, estimate of the Taylor
coefficients of $f_n(z)$ using Cauchy's formula and a suitable contour
in $\gdbd$), \eqref{fn} implies $\E P_k(T_n)=O(n k)$,
see Flajolet and Sedgewick \cite{FS},
Theorem VI.3 and (for the uniformity in $n$) Lemma IX.2 (applied
  to the family \set{f_n(z)/n}).
Hence, \refT{T1} follows from \refT{Tgen1}.

For each pair of
vertices $v,w$ in a rooted tree,  
the path from $v$ to $w$ consists of two (possibly empty) parts, one
going from $v$ towards the root, ending at the last common ancestor
$\vw$ of $v$ and $w$, and another part going from $\vw$ to $w$ in the
direction away from the root.
We will also prove extensions of the results above for $\tn$, where we 
consider separately the lengths of these two parts.
Define
the corresponding bivariate generating function
(now considering ordered pairs $v,w$)
\begin{equation}
  \label{hn}
\hn(x,y)
\=
\E\sumvwtn x^{d(v,\vw)} y^{d(w,\vw)}.
\end{equation}

\begin{theorem}
  \label{Tgen2}
For every $\xi$ there exist positive constants $\CC\CCdef\CCgenii$, 
$\gb$, $\gd$ such
that for all $n\ge1$, 
\begin{equation*}
  |\hn(x,y)| \le \CCx n|1-x|\qw|1-y|\qw,
\qquad x,y\in\gdbd.
\end{equation*}
\end{theorem}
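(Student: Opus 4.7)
The plan is to follow the strategy used for \refT{Tgen1} and work with the trivariate generating function
\begin{equation*}
G(z, x, y) \= \sum_{n} \P(|\cT|=n)\, \hn(x, y)\, z^n = \E\Bigl[z^{|\cT|} \sumvwt x^{d(v, \vw)} y^{d(w, \vw)}\Bigr],
\end{equation*}
so that $p_n \hn(x, y) = [z^n] G(z, x, y)$ where $p_n \= \P(|\cT|=n)$. The goal is to derive an explicit formula for $G$, prove a uniform bound of the form $|G(z,x,y)| \le \CC\, |1-z|\qqw\, |1-x|\qw\, |1-y|\qw$ on a product of Delta-domains, and invoke standard singularity analysis (Cauchy's formula on a Hankel contour around $z=1$) together with Otter's estimate $p_n \sim c\, n\qcw$ to conclude $|\hn(x,y)| \le \CCx\, n\, |1-x|\qw\, |1-y|\qw$.

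To derive the formula for $G$, I decompose the inner sum by the common ancestor $u = \vw$. Writing $D_u(t) \= \sum_{v' \in \cT_u} t^{d(u, v')}$ for the depth-profile generating function of the subtree at $u$, one verifies
\begin{equation*}
\sum_{(v, w):\, \vw = u} x^{d(u,v)}\, y^{d(u,w)} = D_u(x)\, D_u(y) - xy \sum_{c \text{ child of }u} D_c(x)\, D_c(y),
\end{equation*}
the subtraction removing those pairs whose deepest common ancestor lies strictly below $u$. Applying the \GW{} recursion together with the classical depth-profile identity $\E[z^{|\cT|}\, D_\text{root}(t)] = F(z)/(1 - t\,a(z))$, where $a(z) \= z\,\Phi'(F(z))$, and the marked-vertex identity $\E[z^{|\cT|} \sum_u g(\cT_u)] = \E[z^{|\cT|} g(\cT)] / (1 - a(z))$ (valid for any subtree functional $g$), a short computation yields
\begin{equation*}
G(z, x, y) = \frac{F(z)\,\bigl[1 - xy\, a(z)^2 + xy\, z\, F(z)\, \Phi''(F(z))\bigr]}{(1 - a(z))\,(1 - x\,a(z))\,(1 - y\,a(z))}.
\end{equation*}

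The numerator is uniformly bounded on $\gdbdi \times \gdbd \times \gdbd$ for suitably small $\gb_1, \gd_1$, and the estimate $|1 - a(z)| \ge c\,|1-z|\qq$ on the first denominator factor is already part of the argument used for \refT{Tgen1}. The main new ingredient, and the principal technical obstacle, is the uniform lower bound
\begin{equation*}
|1 - x\,a(z)| \ge c\,|1-x|,\qquad z \in \gdbdi,\ x \in \gdbd,
\end{equation*}
together with the symmetric one in $y$. Writing $1 - x\,a(z) = -x\bigl[(1-1/x) - (1-a(z))\bigr]$ reduces this to a lower bound on $|u - v|$ in terms of $|u|$, where $u \= 1 - 1/x$ and $v \= 1 - a(z)$. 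Using the standard expansion $a(z) = 1 - c_0\sqrt{1-z}\,(1 + o(1))$ as $z \to 1$ (with $c_0 > 0$), one sees that $\arg v$ lies in the narrow cone $(-\pi/4 - \gb_1/2,\ \pi/4 + \gb_1/2)$, while $\arg u$ is close to $\arg(x-1) + \pi$ and $|\arg(x-1)| > \pi/2 - \gb$ by definition of $\gdbd$. Choosing $\gb$ and $\gb_1$ so that $\gb + \gb_1/2 < \pi/4$ keeps the angle between $u$ and $v$ bounded below by some $\eps > 0$, and the elementary estimate $|u-v| \ge \sin(\eps)\,|u|$ yields $|1 - x\,a(z)| \ge \sin(\eps)\,|1-x|$. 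This geometric control prevents destructive cancellation in $(1-x) + x(1-a(z))$, which is delicate precisely because the $x$- and $z$-singularities of $G$ coincide at $z = x = 1$.
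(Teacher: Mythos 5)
Your skeleton is essentially the paper's own proof of \refT{Tgen2}: you work with the same generating function $H(z,x,y)=\sum_n\P(|\cT|=n)\hn(x,y)z^n$, and your closed formula is algebraically identical to the paper's \eqref{yh} after substituting $G(z,x)=F(z)/(1-xa(z))$, where $a(z)\=z\Phi'(F(z))$ (the simplification $xa(1-ya)+ya(1-xa)+(1-xa)(1-ya)=1-xya^2$ checks out). The only real difference is how you derive the identity: you decompose by the last common ancestor and use the additive-functional identity $\E\bigl[z^{|\cT|}\sum_u g(\cT_u)\bigr]=\E\bigl[z^{|\cT|}g(\cT)\bigr]/(1-a(z))$, while the paper conditions on the root degree and solves a linear equation for $H$; both routes are correct and of comparable length. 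The analytic core -- the lower bounds $|1-a(z)|\ge c|1-z|\qq$ and $|1-xa(z)|\ge c|1-x|$, the angle-separation (sine-theorem) argument, the uniform transfer step, and division by $\P(|\cT|=n)\sim c\,n\qcw$ -- is exactly the paper's \refL{Lx} and the two lines following it.

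Two points in that analytic core need repair. First, a slip in the key angle computation: $u=1-1/x=(x-1)/x$, so for $x$ near $1$ one has $\arg u\approx\arg(x-1)$, \emph{not} $\arg(x-1)+\pi$; as literally written (which would make $\arg u\approx\arg(1-x)$, constrained only by $|\arg(1-x)|<\pi/2+\gb$) the quoted facts do not separate $u$ from the cone containing $v$ -- take $x\in(0,1)$ real, where $u<0$. With the corrected value the separation is $(\pi/2-\gb)-(\pi/4+\gb_1/2)>0$, which is precisely your condition $\gb+\gb_1/2<\pi/4$, so the intended argument is sound. Second, and more substantively, your argument covers only the local regime $x,z\to1$, where the expansions of $a(z)$ and $1/x$ justify the cone statements; for $x$ or $z$ bounded away from $1$ they fail (\eg{} $x=i\in\gdbd$ gives $\arg u=\pi/4$, inside the $v$-cone), and one needs a separate compactness argument: for $|z-1|\ge\eps$ shrink $\gd$ so that $|a(z)|$ is bounded away from $1$ (using $|F(z)|<1$ on $\gdbd$), and for $|x-1|\ge\eps$ with $z$ near $1$ use that $x\qw$ stays away from $a(z)\to1$, treating $|x|<1/2$ trivially. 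Likewise the bound $|1-a(z)|\ge c|1-z|\qq$ cannot be quoted as ``part of the argument used for \refT{Tgen1}'': in the paper \refT{Tgen1} is \emph{deduced from} \refT{Tgen2}, so this estimate must be proved here (same local expansion plus the same compactness step). All of this is exactly the content of the paper's \refL{Lx}; your proposal omits it, but nothing in your plan obstructs filling it in.
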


Note that, by \eqref{hn} and \eqref{fn}
\begin{equation*}
  \hn(z,z)=\E \sumvwtn z^{d(v,w)} = n+2\fn(z).
\end{equation*}
Hence \refT{Tgen1} follows from \refT{Tgen2}.
We prove \refT{Tgen2} in \refS{Sgen2}.

If we define $\yylm(\tau)\=\#\set{(v,w)\in \tau:d(v,\vw)=\ell,\,d(w,\vw)=m}$,
then
singularity analysis as above (but twice) shows that \refT{Tgen2}
implies the following.
(Since $P_k=\frac12\sum_{\ell=0}^k \yyxx{\ell}{k-\ell}$, this too
implies \refT{T1}.) 

\begin{theorem}
  \label{T11}
There exists a constant $\CC$ 
such that for all 
$\ell,m\ge0$ and $n\ge1$,
$\E \yylm(\tn)\le \CCx n$.
\end{theorem}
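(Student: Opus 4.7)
The plan is to deduce \refT{T11} from \refT{Tgen2} by applying the standard transfer theorem of singularity analysis twice, once in each variable of $\hn(x,y)$. Since $\hn$ is a double power series with nonnegative coefficients and $\E\yylm(\tn)=[x^\ell y^m]\hn(x,y)$, the task is to extract bivariate coefficients from a function whose $\Delta$-domain bound factors as $|\hn(x,y)|\le \CC n|1-x|\qw|1-y|\qw$ on $\gdbd\times\gdbd$. The separable shape of this bound is precisely what makes an iterated application of singularity analysis go through.

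For the first extraction, fix $y\in\gdbd$ and view $x\mapsto\hn(x,y)$ as a one-variable analytic function on $\gdbd$ (joint analyticity on the bidisc follows from the nonnegativity of the coefficients together with the convergence ensured by \refT{Tgen2}). Set
$$a_\ell(n,y)\=[x^\ell]\hn(x,y)=\sum_{m\ge0}\E\yylm(\tn)\,y^m.$$
Cauchy's formula on a Hankel-type contour inside $\gdbd$, combined with the transfer theorem (\cite[Theorem VI.3]{FS}, together with \cite[Lemma IX.2]{FS} applied to the family $\set{\hn(\cdot,y)/n}_n$ to obtain uniformity in $n$), yields
$$|a_\ell(n,y)|\le \CC n|1-y|\qw$$
uniformly in $\ell\ge0$, $n\ge1$, and $y\in\gdxx{\gb'}{\gd'}$ for some slightly smaller $\gb',\gd'>0$. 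The crucial point is that the Hankel contour in $x$ can be chosen once and for all, independently of $y$, so that the factor $|1-y|\qw$ from the bound in \refT{Tgen2} pulls straight out of the integral.

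For the second extraction, $a_\ell(n,y)$ is a power series in $y$ with nonnegative coefficients which, by the previous estimate, is analytic on $\gdxx{\gb'}{\gd'}$ and satisfies a bound of exactly the shape needed to apply singularity analysis in the variable $y$. A second application of the transfer theorem gives
$$[y^m]a_\ell(n,y)=\E\yylm(\tn)=O(n)$$
uniformly in $\ell,m\ge0$ and $n\ge1$, which is \refT{T11}.

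The only step that is more than bookkeeping is the first: transferring the pointwise bound on $\hn(x,y)$ over $\gdbd\times\gdbd$ into a one-variable bound on $a_\ell(n,y)$ that is uniform in $y$ throughout a full (slightly shrunk) $\Delta$-domain, rather than merely for each fixed $y$. Once the separability of the bound in \refT{Tgen2} is exploited as above, the remainder is a routine two-step unfolding of the standard transfer machinery.
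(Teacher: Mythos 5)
Your proposal is correct and follows the paper's own route: the paper proves Theorem~\ref{T11} precisely by applying the transfer theorem of singularity analysis twice to the bound of Theorem~\ref{Tgen2}, using \cite[Theorem VI.3]{FS} with \cite[Lemma IX.2]{FS} for uniformity, exactly as you do. Your elaboration of the first extraction (uniformity in $y$ via the separable factor $|1-y|\qw$ pulled out of a fixed Hankel contour) is a correct filling-in of the details the paper leaves implicit in the phrase ``singularity analysis as above (but twice)''.
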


One motivation for these results is that they (more precisely,
\refT{Tgen2}) are used to prove the second type of result in this
paper.
For this, we assume that we are given a further random variable
$\eta$. Given a rooted tree $\tau$, 
we take an independent copy $\etae$ of $\eta$ for
every edge $e\in \tau$.
We give each vertex $v$ the label $\lv$ obtained by summing $\etae$
for all $e$ in the path from the root $\oo$ to $v$. (Thus,
$\lx{\oo}=0$.)
We assume that 
\begin{align}\label{Aeta}
\E\eta=0 \qquad \text{and} \qquad 0<\gssy\=\Var\eta<\infty.
\end{align}
We further assume that $\eta$ is integer valued and with
span $1$; thus 
all labels are integers, and all integers are possible labels.

We let $X(j;\tau)$ be the number of vertices in $\tau$ with
label $j$; the sequence $(X(j;\tau))_{j=-\infty}^{\infty}$ 
is the \emph{vertical profile} of the labelled tree.

For the random tree $\tn$, we assume that the variables $\etae$ are
independent of $\tn$. The vertical profile $X(j;\tn)$ then is a random function
defined for $j\in\bbZ$; we write $\xn(j)\=X(j;\tn)$ and extend the
domain of $\xn$ to $\bbR$ by linear interpolation between the integer points.
Our next theorem says that this function $\xn$, suitable
normalized, converges in distribution in the space 
$\cor$ of continuous functions on
$\bbR$ that tend to 0 at $\pm\infty$; we equip $\cor$ with the usual
uniform topology defined by the supremum norm.
Let, further, $\fise$ denote the density of the random measure ISE
introduced by Aldous \cite{AldousISE}; $\fise$ is a random continuous
function
with (random) compact support, see 
Bousquet--M\'elou and Janson \cite[Theorem 2.1]{SJ185}.

\begin{theorem}\label{T2}
With the assumptions above,
including \eqref{Axi} and \eqref{Aeta}, 
let $\gam\=\gsy\qw\gs\qq$.
Then, as \ntoo,
  \begin{equation}\label{t2a}
	\frac1n \gam\qw n\qqqq \xn\bigpar{\gam\qw n\qqqq x}
\dto \fise(x),
  \end{equation}
in the space $\cor$ with the usual uniform topology.
Equivalently,
\begin{equation}\label{t2b}
 n\qwwi \xn\bigpar{n\qqqq x}
\dto \gam\fise(\gam x).
\end{equation}
\end{theorem}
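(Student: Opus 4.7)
The plan is to deduce the functional convergence in $\cor$ by combining (i) tightness of the rescaled profile in $\cor$, obtained from \refT{Tgen2} via Fourier analysis on $\bbZ$, with (ii) identification of the limit through the already-known weak convergence of the empirical label measure $n\qw\sum_v\delta_{L_v/n\qqqq}$ to the rescaled ISE density \cite{AldousISE,SJ185}. Throughout, let $Y_n(x)\=n\qwwi X_n(n\qqqq x)$ denote the piecewise-linearly extended rescaling on the \lhs{} of \eqref{t2b}.

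First, introduce the Fourier transform $\widehat X_n(\theta)\=\sum_v e^{-i\theta L_v}$. Conditioning on $\tn$ and splitting each path $v\to w$ at the last common ancestor $\vw$ gives
\[
  \E|\widehat X_n(\theta)|^2 \;=\; \hn\bigpar{\overline{\gfeta(\theta)},\,\gfeta(\theta)},
\]
where $\gfeta(\theta)\=\E e^{i\theta\eta}$. Since $\eta$ is centered with finite variance and span~$1$, $1-\gfeta(\theta)=\tfrac12\gssy\theta^2+O(\theta^3)$ near $0$ and $\Re(1-\gfeta(\theta))>0$ on $(-\pi,\pi)\setminus\{0\}$, so $\gfeta(\theta)\in\gdbd$ for suitable $\gb,\gd$. \refT{Tgen2} yields $\E|\widehat X_n(\theta)|^2\le C\,n\,|1-\gfeta(\theta)|\qww$, which together with the trivial $|\widehat X_n(\theta)|\le n$ bounds $\E|\widehat X_n(\theta)|^2$ by $C\min(n^2,\,n\theta^{-4})$ on $[-\pi,\pi]$. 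Plancherel on $\bbZ$ then gives
\[
  \E\sum_j\bigpar{X_n(j+k)-X_n(j)}^2 \;=\; \frac1\pi\int_{-\pi}^{\pi}(1-\cos k\theta)\,\E|\widehat X_n(\theta)|^2\dd\theta,
\]
and splitting the integral at $|\theta|\sim n^{-1/4}$ (where the two upper bounds cross) and $|\theta|\sim 1/|k|$ (where $1-\cos k\theta$ saturates), using $1-\cos k\theta\le\min(k^2\theta^2/2,\,2)$, gives $\E\sum_j(X_n(j+k)-X_n(j))^2\le C(nk^3+n\qc k^2)$. Setting $k=\floor{n\qqqq t}$ and rescaling produces the $L^2$-modulus bound
\[
  \E\int_\bbR\bigpar{Y_n(x+t)-Y_n(x)}^2\dd x \;\le\; C(t^2+t^3),\qquad t>0.
\]

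For tightness, fix $M>0$ and $\alpha\in(1,\tfrac32)$; integrating the previous estimate against $|t|^{-2\alpha}$ gives
\[
  \E\iint_{[-M,M]^2}\frac{|Y_n(x)-Y_n(y)|^2}{|x-y|^{2\alpha}}\dd x\dd y \;\le\; C_{M,\alpha}
\]
uniformly in $n$. The Garsia--Rodemich--Rumsey inequality then produces a random \Holder{} constant $B_n$, tight in $n$ by Markov's inequality, with $|Y_n(x)-Y_n(y)|\le B_n|x-y|^{\alpha-1}$ on $[-M,M]$; combined with the normalisation $\int_\bbR Y_n\dd x=1$, this yields tightness of $\{Y_n\}$ in $C([-M,M])$. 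Tail control comes from $\max_v|L_v|=O_\P(n\qqqq)$ (itself a consequence of $\max_v\mathrm{depth}(v)=O_\P(\sqrt n)$ for conditioned \GW{} trees combined with standard random-walk concentration), which forces $\mathrm{supp}(Y_n)\subset[-M,M]$ with probability $\to 1$ as $M\to\infty$, giving tightness of $\{Y_n\}$ in $\cor$. To identify any subsequential limit $Y$, for $\phi\in C_c(\bbR)$ one has
\[
  \int\phi(x)\,Y_n(x)\dd x \;=\; n\qw\sum_v\phi\bigpar{L_v/n\qqqq}+o(1),
\]
and the \rhs{} converges jointly in distribution over finite families of $\phi$ to $\int\phi(x)\,\gam\fise(\gam x)\dd x$, via the weak convergence of $n\qw\sum_v\delta_{L_v/n\qqqq}$ to the density $\gam\fise(\gam\cdot)\dd x$ (known from \cite{AldousISE,SJ185}). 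Hence $Y\eqd\gam\fise(\gam\cdot)$, and uniqueness of the subsequential limit upgrades tightness to full convergence.

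The main obstacle is the tightness step: a clean application of Garsia--Rodemich--Rumsey to the piecewise-linear $Y_n$ together with the tightness of the resulting \Holder{} constant, and a derivation of $\max_v|L_v|=O_\P(n\qqqq)$ directly from the moment hypotheses \eqref{Axi} and \eqref{Aeta} rather than by invoking the Brownian snake scaling limit. The Plancherel computation in the second paragraph is essentially routine once the correct splitting is identified, and the final limit identification is the integrated form of already-known measure-level convergence.
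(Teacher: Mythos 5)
Your opening step --- splitting each pair $(v,w)$ at $\vw$ to get $\E\bigl|\sum_v e^{-\ii\theta L_v}\bigr|^2=\hn\bigpar{\overline{\gfeta(\theta)},\gfeta(\theta)}\le Cn|1-\gfeta(\theta)|^{-2}\le C\min(n^2,n\theta^{-4})$ --- is exactly the paper's proof of \refL{L0} in Section 3. At that point the paper stops and invokes \cite[Remark 3.7]{SJ185}, which states that the tightness-and-identification machinery of that paper applies in full generality once \refL{L0} is available; you instead try to rebuild that machinery, and this is where a genuine gap appears.

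The gap is the tail-tightness step in $\cor$. You claim $\max_v|L_v|=O_{\P}(n^{1/4})$ from $\max_v\operatorname{depth}(v)=O_{\P}(\sqrt n)$ plus ``standard random-walk concentration'', and conclude that the support of $Y_n$ lies in $[-M,M]$ with high probability. Under the standing hypothesis \eqref{Aeta}, which assumes only $\E\eta^2<\infty$, this is false. Take $\P(|\eta|>y)\sim y^{-3}$: the variance is finite, but the largest of the $n-1$ edge displacements is of order $n^{1/3}\gg n^{1/4}$ with probability tending to $1$, and since the labels of the two endpoints of that edge differ by exactly this displacement, $\max_v|L_v|/n^{1/4}\to\infty$ in probability; the support of $Y_n$ is not contained in any fixed window. (This is precisely the phenomenon that forces fourth-moment-type conditions for sup-norm convergence of the discrete snake; \refT{T2} survives because those $O(1)$ outlying vertices contribute only $O(n^{-3/4})$ to $Y_n$.) So the maximum-label route cannot work as stated; you would have to bound $\sup_{|x|>M}Y_n(x)$ directly, e.g.\ by combining your \Holder{} estimate with a bound such as $\E\int_{|x|>M}Y_n(x)\dd x\le\delta(M)$ with $\delta(M)\to0$ uniformly in $n$ (a value $\ge\eps$ somewhere in $|x|>M+1$ forces mass of order $\eps^{1+1/(\alpha-1)}B_n^{-1/(\alpha-1)}$ there). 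This is repairable, but it is a missing argument, and it is exactly the delicate point that the citation of \cite[Remark 3.7]{SJ185} takes care of in the paper.

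Two lesser points. Your intermediate bound $\E\sum_j(X_n(j+k)-X_n(j))^2\le C(nk^3+n^{3/2}k^2)$ does not give the claimed modulus $C(t^2+t^3)$: with $k=\floor{n^{1/4}t}$ the second term rescales to $n^{1/4}t^2$. Carrying out your own splitting correctly gives $C(nk^3+n^{5/4}k^2)$, which does rescale to $C(t^3+t^2)$, so this is a slip rather than a gap. Finally, the identification step rests on weak convergence of $n^{-1}\sum_v\delta_{L_v/n^{1/4}}$ to the (rescaled) ISE occupation measure under just \eqref{Axi} and \eqref{Aeta}; this is true, but it needs a precise reference (it is part of the framework of \cite{SJ185}), not \cite{AldousISE} alone, which treats special cases.
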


Note that the random functions on the left and \rhs{s} of \eqref{t2a}
and \eqref{t2b} 
are density functions, \ie,
non-negative functions with integral 1.

\begin{corollary}\label{C2}
If \ntoo{} and $j_n/n\qqqq \to x$, where
$-\infty<x<\infty$,
then 
$n\qwwi X(j_n;\tn)\dto \gam \fise(\gam x)$.
\end{corollary}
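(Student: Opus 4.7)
The plan is to deduce the corollary directly from the functional limit theorem \refT{T2}, specifically from the formulation \eqref{t2b}, via a standard continuous-mapping argument. Set
\begin{equation*}
Y_n(y)\=n\qwwi\xn\bigpar{n\qqqq y},\qquad Y(y)\=\gam\fise(\gam y),
\end{equation*}
viewed as random elements of $\cor$; \refT{T2} asserts $Y_n\dto Y$ in $\cor$ under the uniform topology. Put $x_n\=j_n/n\qqqq$, so that $x_n\to x$ by hypothesis. Because $j_n\in\bbZ$, the piecewise-linear interpolation agrees with the discrete profile at $j_n$, hence $Y_n(x_n)=n\qwwi X(j_n;\tn)$, and the claim becomes $Y_n(x_n)\dto Y(x)$.

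The evaluation map $\cor\times\bbR\to\bbR$, $(\varphi,t)\mapsto\varphi(t)$, is jointly continuous in the product topology (if $\|\varphi_n-\varphi\|_\infty\to0$ and $t_n\to t$, then $|\varphi_n(t_n)-\varphi(t)|\le\|\varphi_n-\varphi\|_\infty+|\varphi(t_n)-\varphi(t)|\to0$, using continuity of $\varphi$). Since $x_n$ is a deterministic sequence converging to $x$, Slutsky's theorem gives the joint convergence $(Y_n,x_n)\dto(Y,x)$ in $\cor\times\bbR$. Applying the continuous mapping theorem to the evaluation map yields
\begin{equation*}
n\qwwi X(j_n;\tn)=Y_n(x_n)\dto Y(x)=\gam\fise(\gam x),
\end{equation*}
as required.

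There is no real obstacle; the corollary is essentially a boilerplate consequence of a functional limit theorem in $\cor$, and the only point worth checking carefully is that the limit $\gam\fise(\gam\cdot)$ indeed lies in $\cor$ (so that the uniform topology makes sense), which is already built into the statement of \refT{T2} via the continuity and compact support of $\fise$.
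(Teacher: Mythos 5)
Your argument is correct and is exactly the (implicit) derivation the paper intends: Corollary~\ref{C2} is stated as an immediate consequence of Theorem~\ref{T2}, obtained by evaluating the functional convergence \eqref{t2b} in $\cor$ at the converging points $x_n=j_n/n\qqqq\to x$, using joint continuity of the evaluation map and the fact that the interpolation agrees with $X(j_n;\tn)$ at integer points. Nothing further is needed.
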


The limit law is characterized in \cite{BM} by a formula for its
Laplace transform.

\refT{T2} was conjectured in \cite{SJ185}, and proved there in two
special cases, \viz{} when $\xi$ has the Geometric distribution
$\Ge(1/2)$ and thus $\tn$ is a 
random ordered tree, and $\eta$ is uniformly distributed on either
\set{-1,1} or \set{-1,0,1}.
Moreover, it was shown there \cite[Remark 3.7]{SJ185} that the proof
given in \cite{SJ185} applies generally under the assumptions above,
provided the following estimate holds.

\begin{lemma}
  \label{L0}
Under the assumptions above,
there exists a constant $\CC$ such that for all $n\ge1$ and
$t\in[-\pi,\pi]$,
\begin{equation}
  \label{l2}
\E\biggabs{\frac1n\sum_j X(j;\tn)e^{\ii jt}}^2
\le \frac{\CCx}{1+nt^4}.
\end{equation}
\end{lemma}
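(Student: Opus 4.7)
The plan is to identify the \lhs{} of \eqref{l2} with a value of the bivariate generating function $\hn$ from \refT{Tgen2}. Since $\sum_j X(j;\tn)e^{\ii jt}=\sum_{v\in\tn}e^{\ii L_v t}$, expanding the square yields
\begin{equation*}
  \E\biggabs{\frac1n\sum_j X(j;\tn)e^{\ii jt}}^2
  =\frac{1}{n^2}\,\E\sumvwtn e^{\ii(L_v-L_w)t}.
\end{equation*}
Conditioning on $\tn$, the edge labels $\etae$ are \iid{} and the two paths from $\vw$ to $v$ and to $w$ are edge-disjoint, so, writing $\gfeta(t)\=\E e^{\ii t\eta}$, one has
$\E\bigsqpar{e^{\ii(L_v-L_w)t}\bigm|\tn}=\gfeta(t)^{d(v,\vw)}\,\overline{\gfeta(t)}^{\,d(w,\vw)}$.
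Taking the outer expectation and recalling \eqref{hn} gives the clean identity
\begin{equation*}
  \E\biggabs{\frac1n\sum_j X(j;\tn)e^{\ii jt}}^2
  =\frac{1}{n^2}\,\hn\bigpar{\gfeta(t),\overline{\gfeta(t)}}.
\end{equation*}

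To apply \refT{Tgen2} I must check that $\gfeta(t)\in\gdbd$ for every $t\in[-\pi,\pi]\setminus\{0\}$. This is essentially automatic: $|\gfeta(t)|\le1<1+\gd$; the span-$1$ assumption gives $\gfeta(t)\ne1$; and $\Re\bigpar{1-\gfeta(t)}=\E\bigsqpar{1-\cos(t\eta)}\ge0$ puts $1-\gfeta(t)$ in the closed right half-plane, so $|\arg(1-\gfeta(t))|\le\pi/2<\pi/2+\gb$. \refT{Tgen2} then supplies
\begin{equation*}
  \E\biggabs{\frac1n\sum_j X(j;\tn)e^{\ii jt}}^2
  \le \frac{\CCgenii}{n}\,\bigabs{1-\gfeta(t)}\qww.
\end{equation*}

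The final ingredient is a lower bound $|1-\gfeta(t)|\ge\cc\min(t^2,1)$ on $[-\pi,\pi]$: for small $|t|$ it follows from $\Re\bigpar{1-\gfeta(t)}\sim\gssy t^2/2$ (using $\E\eta=0$ and $\gssy<\infty$), and on any compact subset bounded away from $0$ it follows from continuity of $\gfeta$ together with $\gfeta(t)\ne1$. Substituting gives $\E\bigabs{\,\cdot\,}^2\le\CC/(nt^4)$ whenever $nt^4\ge1$; together with the trivial bound $\E\bigabs{\,\cdot\,}^2\le1$ (from $\sum_j X(j;\tn)=n$), a routine case split on the sign of $nt^4-1$ delivers \eqref{l2}. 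The only substantive step is the reduction to $\hn$; once that is done, the remaining work is standard estimation of characteristic functions, and the hypothesis that $\eta$ is integer-valued with span~$1$ enters only to guarantee $\gfeta(t)\ne1$ on $[-\pi,\pi]\setminus\{0\}$.
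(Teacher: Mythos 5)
Your argument is correct and is essentially the paper's own proof: the same reduction $\E\bigabs{\frac1n\sum_j X(j;\tn)e^{\ii jt}}^2=n\qww\hn\bigpar{\gfeta(t),\overline{\gfeta(t)}}$ via conditional independence of the two path increments, the same appeal to \refT{Tgen2}, and the same lower bound $|1-\gfeta(t)|\ge c\,t^2$ on $[-\pi,\pi]$ obtained from $\E\eta=0$, $\gssy<\infty$ and the span-$1$ condition. Your extra details (checking $\gfeta(t)\in\gdbd$ explicitly and the case split on $nt^4$ versus the paper's one-line bound $(1+nt^4)\psint\le 1+nt^4\psint$) are only cosmetic variations.
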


We prove \refL{L0}, and thus \refT{T2}, in \refS{Sl2},
assuming \refT{Tgen2}.
Finally, we prove \refT{Tgen2}, using singularity analysis again, in
\refS{Sgen2}, which completes the proof of all other results.

\begin{ack}
  This research was mainly done during a workshop at Bellairs Research
  Institute in Barbados, March 2006,
and completed during a visit of SJ to 
Centre de recherches math\'ematiques, Universit\'e de
Montr\'eal, October 2008.
\end{ack}

\section{First proof of \refT{T1}} \label{Spf1}

In a rooted tree $\tau$, let
$Q_k(\tau)$, $k\ge1$, denote the number of (unordered) pairs of vertices
at path distance $k$ from each other such that the path
between them visits the root,
and let $Q'_k(\tau)$ be the number of such pairs where 
the root cannot be one of the two vertices in the pair;
thus $Q_k(\tau)=Q'_k(\tau) + Z_k(\tau)$.
Then, in the Galton--Watson tree $\cT$, if $\xi$ is the number of
children of the root, and the subtrees rooted at these children are
denoted $\cT_1,\dots,\cT_\xi$,
\begin{equation}
  \label{qkt}
Q'_k(\cT) = \sum_{(r,s):  1 \le r < s \le \xi} \sum_{j=0}^{k-2} Z_j  (\cT_r)
Z_{k-2-j} (\cT_s) 
\end{equation}
and thus, since we assume $\cT$ to be critical, \ie, $\E\xi=1$, so
$\E Z_k(\cT)=1$ for every $k$,
\begin{equation}
  \label{qk}
\E Q_k(\cT)=\E Z_k(\cT)+\E Q'_k(\cT) = 1+ \E\frac{\xi(\xi-1)}2 (k-1)
=1+(k-1)\frac{\gss}2.
\end{equation}

Let $\htn$ denote the random subtree of $T_n$ 
rooted at a uniformly selected random vertex. 
(Note the difference from $\tnx$ in \refC{C1}; in $\tnx$ we keep
all $n$ vertices, but in $\htn$ we keep only the vertices below the
new root.)
Then, clearly,
$$
\EXP \{ P_{k}(T_n) \} = n \EXP \{ Q_k  (\hT_n) \}.
$$
Consequently, \refT{T1} is equivalent to:
\begin{theorem}
  \label{TP1}
There exists a constant $\CC$ 
such that for all 
$k\ge1$ and $n\ge1$,
$\E Q_k(\htn)\le \CCx k$.
\end{theorem}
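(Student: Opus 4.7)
My plan is to prove $\E Q_k(\htn)\le Ck$ uniformly in $n,k\ge 1$ via the decomposition $Q_k(\htn)=Z_k(\htn)+Q'_k(\htn)$. The first term is easy: summing $Z_k(\hT_n^{(v)})$ over $v\in\tn$ counts vertices of depth at least $k$ in $\tn$, which is at most $n$; hence $\E Z_k(\htn)\le 1\le k$.

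For $\E Q'_k(\htn)$ I would apply \eqref{qkt} to $\htn$: letting $\cT_1,\ldots,\cT_{\hat\xi}$ denote the subtrees at the children of the random root of $\htn$, we have $Q'_k(\htn)=\sum_{r<s}\sum_{j=0}^{k-2}Z_j(\cT_r)Z_{k-2-j}(\cT_s)$. By the branching property, conditionally on the subtree sizes $(|\cT_i|)$ the $\cT_i$ are independent conditioned Galton--Watson trees distributed as $T_{|\cT_i|}$, so the cross-expectations factor as $b_{|\cT_r|}(j)\,b_{|\cT_s|}(k-2-j)$, where $b_m(j):=\E Z_j(T_m)$. Combined with the uniform profile bound $b_m(j)\le C_0 j$ from \cite[Theorem~1.13]{SJ167}, this reduces the problem to a bound on the resulting doubly-indexed sum.

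To complete the estimate I would exploit the identity $\E Q_k(\cT)=1+(k-1)\gss/2=O(k)$ for the unconditioned tree, which follows immediately from \eqref{qk}. Since $\htn$ conditional on $|\htn|=m$ has the same law as $T_m$, one has $\E Q_k(\htn)=\sum_m \P(|\htn|=m)\,\E Q_k(T_m)$, to be compared with $\E Q_k(\cT)=\sum_m p_m\,\E Q_k(T_m)$, where $p_m=\P(|\cT|=m)$. The problem thus reduces to comparing the size distribution of $\htn$ with $p_m$, which can be carried out via the generating function $g_m^*(z)=p_m z^m/(1-z\phi'(F(z)))$ whose coefficients give $\P(|\htn|=m)=p_m h_{n-m}/(np_n)$ for suitable $h_j$.

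The main obstacle is that $\P(|\htn|=m)$ and $p_m$ are comparable only in the principal regime $m\ll n$; in the boundary regime $m\approx n$ the ratio $\P(|\htn|=m)/p_m$ can grow like $\sqrt n$, and a naive application of $b_m(j)\le C_0 j$ inside the sum gives a useless $O(k^3)$ contribution. I would deal with this by splitting: for $k\lesssim\sqrt n$ the comparison with $\cT$ carries the argument through with a bounded constant, while for $k\gtrsim\sqrt n$ one uses the known fact that $\tn$ has diameter of order $\sqrt n$ with high probability, so the contribution of large $k$ is negligible. Tying together the two regimes uniformly is the hardest part of the argument.
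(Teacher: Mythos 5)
Your overall route is in fact the paper's route: transfer from $\htn$ to the unconditioned tree $\cT$ (equivalently, compare $\P(|\htn|=m)$ with $p_m=\P(|\cT|=m)$; Aldous's fringe identity gives a ratio of order $\sqrt{n/(n-m+1)}$), use $\E Q_k(\cT)=1+(k-1)\gss/2=O(k)$ from \eqref{qk} for the bulk, and fight the boundary regime $m>n/2$ where the ratio blows up. The genuine gap is that you have no usable bound on $\E Q_k(T_m)$ in that regime. The paper supplies exactly this as a separate intermediate result, \refT{TQ}: $\E Q_k(T_m)\le C k\sqrt m$ uniformly in $k,m$; with it the boundary contribution is $\sum_{m>n/2}\P(|\htn|=m)\,\E Q_k(T_m)\le C\sum_{m>n/2} m^{-3/2}\sqrt{n/(n-m+1)}\,k\sqrt m=O(k)$, which closes the proof. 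Proving \refT{TQ} is the real work of \refS{Spf1}: it uses the higher-moment profile bound $\E Z_k(T_m)^r\le C(r)m^{r/2}$ (\refL{L1a}) and the decomposition over root-subtree sizes via \refL{L2}; it does not follow from $b_m(j)=\E Z_j(T_m)\le C_0 j$ alone, and in your second paragraph the conditional factorization still leaves you facing the nontrivial joint law of the root-subtree sizes of a conditioned tree, which is precisely where \refL{L2} and \eqref{tail} are needed.

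Your proposed substitute, splitting at $k\asymp\sqrt n$, does not close this. For $k\lesssim\sqrt n$ the assertion that "the comparison carries through with a bounded constant" is exactly the unproved point: without \refT{TQ} the best boundary bound available to you is (at most, and even this needs an argument for the dependent subtrees) $\E Q_k(T_m)\lesssim k^3$, and since $\sum_{m>n/2}\P(|\htn|=m)=O(n^{-1/2})$, this yields $O(k)$ only for $k\lesssim n^{1/4}$, leaving $n^{1/4}\ll k\lesssim\sqrt n$ uncovered. For $k\gtrsim\sqrt n$ a "diameter is $O(\sqrt n)$ whp" statement cannot control an expectation at the critical scale: when $k\asymp\sqrt n$ the event that $\tn$ has height at least $k$ has probability bounded away from $0$, and the boundary terms genuinely contribute a constant multiple of $k$ (the paper notes after \refT{TQ} that its bound is of the right order for $k\sim\sqrt n$), so they must be estimated rather than discarded; even a sub-Gaussian height tail $\P(H(\tn)\ge k)\le Ce^{-ck^2/n}$ (itself not elementary under only a second moment) combined with the trivial $Q_k\le\binom{n}{2}$ fails for $k$ of order $\sqrt n$. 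So the missing ingredient is the uniform estimate $\E Q_k(T_m)\le Ck\sqrt m$ or something of equal strength; without it your plan does not yield the theorem.
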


In order to prove this, we will need a related, but different,
estimate for the \cGWt{} $T_n$.

\begin{theorem}
  \label{TQ}
There exists a constant $\CC\CCdef\CCTQ$ 
such that for all 
$k\ge1$ and $n\ge1$,
$\E Q_k(\tn)\le \CCx k\sqrt n$.
\end{theorem}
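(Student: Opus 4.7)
Split $Q_k(T_n) = Z_k(T_n) + Q'_k(T_n)$ along the lines of \eqref{qkt}, where $Q'_k$ counts those pairs at distance $k$ whose path contains the root but neither of whose endpoints equals the root. The first piece is immediately bounded by $\E Z_k(T_n) \le Ck$ via \eqref{wk}. For $Q'_k$, I would condition on the root's degree $\xi$ and the subtree sizes $M_1,\dots,M_\xi$ at the root (so $\sum_i M_i = n-1$); given these, the subtrees are independent \cGWt{s} of the respective sizes $M_i$, and applying \eqref{qkt} inside $T_n$ gives
\[
\E\bigl[Q'_k(T_n) \bigm| \xi, M_1,\dots,M_\xi\bigr]
= \sum_{r<s} \sum_{j=0}^{k-2} \phi_j(M_r)\,\phi_{k-2-j}(M_s),
\]
where $\phi_j(m) := \E Z_j(T_m)$.

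The key profile estimates are $\phi_j(m) \le Cj$ (from \eqref{wk}) and $\phi_j(m) \le C\sqrt m$ (the expected maximum width of a \cGWt{} of size $m$ being $O(\sqrt m)$), which combine to $\phi_j(m) \le C\min(j,\sqrt m)$. In the range $k \le n^{1/4}$, using only $\phi_j(m) \le Cj$ gives $\sum_j \phi_j(M_r)\phi_{k-2-j}(M_s) \le C'k^3$, and hence, after taking expectation against the uniformly controlled distribution of the root degree, $\E Q_k(T_n) \le C''k^3 \le C'''k\sqrt n$. For the complementary range $k > n^{1/4}$, using $\phi_j(m) \le C\min(j,\sqrt m)$ yields
\[
\sum_{j=0}^{k-2} \phi_j(M_r)\phi_{k-2-j}(M_s) \le Ck \sqrt{M_r M_s},
\]
which reduces the theorem to showing $\E\bigl[\sum_{r<s}\sqrt{M_r M_s}\bigr] \le C\sqrt n$.

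The main obstacle is this last estimate: a naive Cauchy--Schwarz, $\sum_r \sqrt{M_r} \le \sqrt{\xi(n-1)}$, only gives $O(n)$, losing the required factor $\sqrt n$. I would recover it by exploiting the precise joint distribution of $(\xi, M_1,\dots,M_\xi)$ under the constraint $\sum_i M_i = n-1$: the local-limit-theorem estimate $p_m^{*l} \asymp l\,m^{-3/2}$ for $l$-fold convolutions of the heavy-tailed size distribution $p_m = \P(|\cT|=m) \asymp m^{-3/2}$ forces a condensation structure---typically one $M_i$ is of order $n$ and the others only of order $\sqrt n$---while the effective cut-off of $\phi_j(m)$ at $j \asymp \sqrt m$ further truncates the sum over $j$. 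Combining these two ingredients carefully is the technical heart of the proof and should deliver the needed $\sqrt n$ factor, completing the estimate $\E Q_k(T_n) \le Ck\sqrt n$.
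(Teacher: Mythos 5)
Your skeleton is the same as the paper's (split off the pairs containing the root via \eqref{wk}, condition on the root degree and subtree sizes, and use $\E Z_j(T_m)\le C\min(j,\sqrt m)$ — note that the $O(\sqrt m)$ width bound under a second moment only is itself nontrivial and is proved in the paper as Lemma~\ref{L1a}), but both branches of your case analysis have genuine gaps. In the range $k\le n^{1/4}$ you bound the inner $j$-sum by $Ck^3$ and then need $\E\binom{D}{2}=O(\sqrt n/k^2)$, where $D$ is the root degree of $\tn$; in particular $O(1)$ when $k\asymp n^{1/4}$. But $D$ is essentially size-biased: $\P(D=\ell)=p_\ell\P(W_\ell=n-1)/\P(|\cT|=n)$, which by Lemma~\ref{L2} is of order $\ell p_\ell e^{-c\ell^2/n}$, so $\E\binom{D}{2}$ stays bounded only if $\E\xi^3<\infty$ — not assumed here; for $p_\ell\asymp\ell^{-3}(\log\ell)^{-2}$ it grows like $\sqrt n(\log n)^{-2}$, and your bound becomes $k^3\sqrt n(\log n)^{-2}\gg k\sqrt n$. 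The paper flags exactly this point (the remark that $\E\xi^3<\infty$ would simplify the proof) and avoids it by keeping the conditioning explicit, with weights $p_\ell\binom{\ell}{2}$ against $\P(|\cT|=n)^{-1}$, and proving the bound $\Sigma_3(\ell)\le Ck/n$ \emph{uniformly} in $\ell$ using $\P(W_{\ell-2}=j)\le C/j$ from \eqref{l2c}.

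The large-$k$ branch is worse: the inequality you reduce to, $\E\sum_{r<s}\sqrt{M_rM_s}\le C\sqrt n$, is false, so no amount of condensation analysis will deliver it. Take any $\ell\ge2$ with $p_\ell>0$ (such $\ell$ exists since $\E\xi=1$, $\Var\xi>0$), assume span $1$ for simplicity, and restrict to trees in which the root has $\ell$ children and all but the first two subtrees are singletons; with $q_m\=\P(|\cT|=m)\asymp m^{-3/2}$,
\begin{equation*}
\E\sum_{r<s}\sqrt{M_rM_s}
\;\ge\;
\frac{p_\ell\,p_0^{\ell-2}}{q_n}\sum_{n_1+n_2=n-\ell+1}q_{n_1}q_{n_2}\sqrt{n_1n_2}
\;\asymp\;
n^{3/2}\sum_{n_1\le n/2}\frac{1}{n_1\,n}
\;\asymp\;
\sqrt n\,\log n ,
\end{equation*}
so your route gives at best $Ck\sqrt n\log n$. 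The loss occurs when you bound the $j$-sum termwise by $k\sqrt{M_rM_s}$. The missing idea — and the paper's key step — is the exact identity $\sum_{j\ge0}Z_j(T_m)=m$, i.e.\ $\sum_{j=0}^{k-2}\phi_j(M_r)\le M_r$, giving the inner sum the bound $C(k^2\wedge M_r)(k\wedge\sqrt{M_s})$; the weight $q_{n_1}(k^2\wedge n_1)\asymp n_1^{-3/2}(k^2\wedge n_1)$ then sums over $n_1$ to $O(k)$ instead of producing the logarithm, and combined with the uniform-in-$\ell$ treatment above the computation closes to $Ck\sqrt n$ without any third moment.
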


It is easy to see $\E Q_k(\tn)\ge \cc n^{3/2}$ when $k\sim\sqrt n$,
so the estimate in \refT{TQ} then is of the right order; in particular, the
estimate in \refT{TP1} for $\htn$ does \emph{not} hold for $\tn$.

To prove these theorems we use a few more or less standard estimates.

\begin{lemma} 
  \label{L2}
Assume, as above, \eqref{Axi}, 
and let $d$ be the span of $\xi$.
Let $S_n\=\sum_{i=1}^n\xi_i$, where $\xi_i$ are independent copies of $\xi$.
Then, for $n\equiv 1\pmod d$, 
  \begin{equation}\label{tail}
\P(|\cT|=n)=\frac1n\P\xpar{S_n=n-1}	
\sim \frac{d}{\sigma \sqrt{2 \pi}\, n^{3/2}}
\quad\text{as }\ntoo.	
  \end{equation}

More generally, let $W_\ell\=\sum_{i=1}^\ell|\cT_i|$ be the size of the
union of $\ell$ independent copies of $\cT$, or equivalently, the
total progeny of a Galton--Watson process started with $\ell$
individuals,
with offspring distribution $\xi$.
Then, for all $\ell\ge1$ and
$n\ge1$,  
\begin{equation}\label{l2b}
  \P(W_\ell=n)=\frac{\ell}n\P(S_{n}=n-\ell)	
\le \CC \ell n\qcw\exp(-\cc\ell^2/n).
\CCdef\CClb \ccdef\cclb
\end{equation}
In particular,
\begin{equation}\label{l2c}
  \P(W_\ell=n)\le \CC n\qw.
\end{equation}
\end{lemma}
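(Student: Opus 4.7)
The plan is to address the three claims in order. The identities $\P(|\cT|=n) = n\qw \P(S_n = n-1)$ in \eqref{tail} and $\P(W_\ell = n) = (\ell/n)\P(S_n = n-\ell)$ in \eqref{l2b} are the classical Otter--Dwass--Kemperman formulas (cycle lemma): $W_\ell = n$ exactly when the random walk with i.i.d.\ steps $\xi_i - 1$, started from $\ell$, first hits $0$ at time $n$, and by the cyclic symmetry of lattice paths this occurs for an $\ell/n$ fraction of the $n$ cyclic rotations of any $n$-step walk ending at $n-\ell$. I would cite these directly rather than reprove them.

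The asymptotic in \eqref{tail} then follows from the local central limit theorem applied to $S_n$: since $S_n$ has mean $n$, variance $n\gss$, and lattice span $d$, LCLT yields $\P(S_n = n-1) \sim d/(\gs\sqrt{2\pi n})$ as $\ntoo$, and dividing by $n$ gives the claim.

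The substance of the lemma is the upper bound in \eqref{l2b}. By Kemperman this reduces to proving the uniform estimate $\P(S_n = n-\ell) \le C n\qqw \exp(-c\ell^2/n)$ for all $1 \le \ell \le n$. I plan to prove this by exponential tilting, using crucially that $\xi \ge 0$, so that $M(\gl) \= \E e^{-\gl\xi}$ is finite and analytic for every $\gl \ge 0$. For $\ell \le c_0 n$ with $c_0$ small, choose $\gl = \gl(\ell,n)>0$ so that the tilted mean $-M'(\gl)/M(\gl)$ equals $1 - \ell/n$; Taylor expansion at $\gl=0$ (using $\E\xi = 1$, $\Var\xi = \gss$) gives $\gl \sim \ell/(\gss n)$ and shows the Cram\'er rate satisfies $nI(1-\ell/n) = \ell^2/(2\gss n) + o(\ell^2/n)$, whence $M(\gl)^n e^{\gl(n-\ell)} \le \exp(-c\ell^2/n)$. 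Under the tilted law the walk has mean exactly $n-\ell$ and variance uniformly bounded away from $0$ and $\infty$ as $\gl$ varies in a compact set, so a uniform LCLT gives $\widetilde\P(\widetilde S_n = n-\ell) \le C n\qqw$. Multiplying the two bounds yields the desired estimate. The complementary range $c_0 n < \ell \le n$ is handled by a fixed tilt $\gl_0$: the Chernoff bound $\P(S_n \le n-\ell) \le M(\gl_0)^n e^{\gl_0(n-\ell)} \le e^{-c\ell}$ dominates $C n\qqw\exp(-c\ell^2/n)$ there.

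Finally, \eqref{l2c} is an immediate consequence of \eqref{l2b}, since $\ell\mapsto \ell\exp(-c\ell^2/n)$ is maximized at $\ell\asymp\sqrt n$ with maximum of order $\sqrt n$, giving $\P(W_\ell=n) \le Cn\qcw\cdot\sqrt n = C'n\qw$. The main technical obstacle in the plan is ensuring that the LCLT invoked at the tilted step is \emph{uniform} as $\gl(\ell,n)$ varies over a compact interval; this is standard (e.g.\ Stone's uniform LCLT, or a direct Fourier argument bounding the characteristic function of the tilted step on the lattice), but must be tracked carefully so that the constants in the final bound are independent of $\ell$ and $n$.
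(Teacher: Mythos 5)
Your proposal is correct, and its skeleton coincides with the paper's: the identities in \eqref{tail} and \eqref{l2b} are quoted as the classical Dwass/Kemperman (cycle lemma) formulas, the asymptotic in \eqref{tail} comes from the local central limit theorem, and \eqref{l2c} follows from \eqref{l2b} by maximizing $\ell\mapsto\ell e^{-c\ell^2/n}$ (the paper phrases this as the inequality $e^{-x}\le x^{-1/2}$, which is the same computation). The one genuine divergence is the key estimate $\P(S_n=n-\ell)\le C n^{-1/2}\exp(-c\ell^2/n)$: the paper simply imports this from \cite[Lemma 2.1]{SJ167}, whereas you prove it from scratch by exponential tilting with $M(\gl)=\E e^{-\gl\xi}$ (finite for all $\gl\ge0$ since $\xi\ge0$), solving for the tilt that centers the walk at $n-\ell$ when $\ell\le c_0n$ and falling back on a fixed-tilt Chernoff bound for $\ell>c_0n$. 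Your argument is sound under the standing hypothesis $\E\xi^2<\infty$ alone, since $\log M$ is twice continuously differentiable at $0^+$ and the tilted variance stays bounded away from $0$ and $\infty$ for $\gl$ in a compact interval, so the uniform concentration bound $\sup_m\widetilde\P(\widetilde S_n=m)\le Cn^{-1/2}$ that you flag does go through by the standard Fourier argument. What your route buys is self-containedness at the cost of length; what the paper's route buys is brevity, at the cost of an external reference whose proof is, in essence, the same tilting computation you outline.
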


\begin{proof}
The identity in \eqref{l2b} is well-known, see \eg, Dwass
\cite{Dwass}, Kolchin \cite[Lemma 2.1.3, p.~105]{Kolchin} and
\citet{Pitman:enum}. 
The identity in \eqref{tail} is the special case $\ell=1$, and
the well-known tail estimate in \eqref{tail} then follows by the local
central limit theorem, see, \eg, Kolchin \cite[Lemma 2.1.4, p.~105]{Kolchin}.

Similarly, the inequality in \eqref{l2b} follows by the 
estimate
$\P(S_n=n-\ell)\le \CClb n\qqw \exp(-\cclb\ell^2/n)$
from \cite[Lemma 2.1]{SJ167}.
The inequality $e^{-x}\le x\qqw$ yields \eqref{l2c}.
\end{proof}

\begin{lemma}  
  \label{L1a}
For every $r>0$ there is a constant $\CC(r)\CCdef\CCr$ such that
for all $k\ge0$ and $n\ge1$, $\E Z_k(T_n)^r\le \CCr(r) n^{r/2}$.
\end{lemma}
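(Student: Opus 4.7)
My plan is to prove \refL{L1a} in three steps: reduction to integer $r$ via Jensen's inequality, a spinal decomposition for integer $r$, and combinatorial estimation using \refL{L2}.

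\emph{Reduction.} For any $r'\ge r$, Jensen's inequality applied to the concave function $x\mapsto x^{r/r'}$ gives $\E Z_k(T_n)^r\le\bigpar{\E Z_k(T_n)^{r'}}^{r/r'}$, so it suffices to prove the bound for every positive integer $r$.

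\emph{Spinal expansion.} For integer $r$, I use the identity $\E Z_k(T_n)^r=\P(|\cT|=n)^{-1}\E[Z_k(\cT)^r\mathbf 1\{|\cT|=n\}]$ together with $\P(|\cT|=n)\asymp n\qcw$ from \refL{L2}. I expand $Z_k(\cT)^r$ as a sum over ordered $r$-tuples $(v_1,\ldots,v_r)$ of depth-$k$ vertices in $\cT$, and group the tuples by their binary LCA-tree $\mathcal B$: the rooted binary tree with $r$ marked leaves whose $r-1$ internal binary branchings sit at the pairwise lowest common ancestors of the $v_i$, at depths $0\le t_1<\cdots<t_{r-1}<k$. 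The union $\mathcal R$ of the root-to-$v_i$ paths is a subtree of $\cT$ with $|\mathcal R|\le 1+rk$. Conditional on $\mathcal R$ being the spine and on the offspring counts of its non-leaf vertices, the complement of $\mathcal R$ in $\cT$ consists of $\ell$ independent pendant copies of $\cT$ whose sizes must sum to $n-|\mathcal R|$. Summing over offspring counts, each of the $r-1$ binary branching vertices contributes a weight at most $\E\xi(\xi-1)=\gss<\infty$ (the reason for decomposing via \emph{binary} LCA-trees is precisely to avoid needing any higher factorial moments of $\xi$ than the second, since only $\Var\xi<\infty$ is assumed); non-branching spine vertices contribute a bounded factor through $\E\xi=1$; and \refL{L2} bounds the pendant contribution by $C\ell\,n\qcw\exp(-c\ell^2/n)$, valid provided $|\mathcal R|\le n/2$, with the complementary case $|\mathcal R|>n/2$ (which forces $k\gtrsim n/r$) handled trivially using $Z_k\le n$.

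\emph{Estimation.} After the $n\qcw$ factors from $\P(|\cT|=n)\qw$ and from \refL{L2} cancel, the bound reduces to $C_r\sum_{\text{config}}\ell\exp(-c\ell^2/n)$ summed over choices of branching depths $0\le t_1<\cdots<t_{r-1}<k$ (at most $\binom{k}{r-1}$ such choices) and allocations of the $\ell$ pendant subtrees among the $r$ branches. The main obstacle is this combinatorial sum. The key estimate is that each of the $r$ branches of $\mathcal B$ effectively contributes a $\sqrt n$-factor via the Gaussian tail of \refL{L2}: summing over branchwise pendant sizes $(\ell_1,\ldots,\ell_r)$ with $\sum\ell_j=\ell$ and $\ell\exp(-c\ell^2/n)$ localizes at $\ell\sim\sqrt n$, producing an $r$-fold product of $\sqrt n$-factors. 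For $k\lesssim\sqrt n$ this combines with the $O(k^{r-1})$ count of branching-depth choices to give $O(n^{r/2})$; for $k\gtrsim\sqrt n$ the Gaussian decay $\exp(-c\ell^2/n)$ dominates and forces the $k$-dependence to drop out, yielding the required uniform bound $\CCr(r)\,n^{r/2}$.
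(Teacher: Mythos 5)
There is a genuine gap at the heart of your spinal expansion: the ``binary LCA-tree'' decomposition does not exist in the form you describe. An ordered $r$-tuple $(v_1,\dots,v_r)$ of depth-$k$ vertices need not have $r-1$ distinct binary branch points: three or more of the root-to-$v_i$ paths can separate at the \emph{same} vertex of $\cT$ (already for $r=3$, the tuples whose three pairwise LCAs coincide form a class that must be counted). At a vertex where $j$ branches separate, summing over its offspring count produces the factorial moment $\E\xi(\xi-1)\cdots(\xi-j+1)$, and for $j\ge3$ this need not be finite under the standing assumption \eqref{Axi}, which gives only $\E\xi^2<\infty$. You cannot legislate these configurations away by declaring the LCA tree binary, and the truncation at $n$ that the conditioning provides for free only yields $\E\bigsqpar{\xi(\xi-1)(\xi-2)\ett{\xi\le n}}\le n\gss$, which can genuinely grow polynomially in $n$ and destroys the $n^{r/2}$ bound. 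This is exactly the obstruction flagged in the remark after \refL{L1b}: moment bounds for $Z_k$ obtained by tuple/spine expansions require $\E\xi^{r+1}<\infty$ and fail otherwise, whereas the point of \refL{L1a} is that $O(n^{r/2})$ holds for \emph{every} $r$ with only a second moment. (Your Jensen reduction to integer $r$ is fine, and your final estimation step is too vague to audit, but the moment issue alone is fatal.)

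The paper's proof avoids tuple expansions entirely. It compares the law of the pruned tree $T_n\hk$ with that of $\cT\hk$: writing $m=Z_k(\tau)$ and $N=n-|\tau|+m$, one has $\P(T_n\hk=\tau)=\P(\cT\hk=\tau)\P(W_m=N)/\P(|\cT|=n)$, and the Gaussian factor $\exp(-cm^2/N)$ from \eqref{l2b} is spent, via $e^{-x}\le C(r)x^{-r/2}$, to absorb a factor $m^{r-1}$, giving $\P(Z_k(T_n)=m)\le C(r)\,n^{r/2}m^{1-r}\,\P(Z_k(\cT)=m)$ for $r\ge3$. The $r$-th moment of $Z_k(T_n)$ is thereby reduced to the \emph{first} moment of $Z_k(\cT)$, which equals $1$ by criticality, so no higher moment of $\xi$ ever enters. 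To salvage your approach you would have to find an analogous way to trade the Gaussian tail against the offending factorial moments at multi-way branch points, which in effect means abandoning the tuple expansion for the paper's change-of-measure argument.
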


\begin{proof}
For any rooted tree $\tauq$, let $\tauq\hk$ be the tree pruned at height
$k$, i.e., the subtree consisting of all vertices of distance at most $k$
from the root. 
Let $\tau$ be a given rooted tree of height $k$, and let
$m\=Z_k(\tau)$, the number of leaves at maximal depth.
Note that if $\tau= \tauq\hk$ for some tree $\tauq$, then $|\tauq|=n$
if and only if $\tauq$ has $n-|\tau|$ vertices at greater depth than $k$,
and thus $N\=n-|\tau|+m$ vertices at depth $k$ or greater.
Hence, with $W_m$ as in \refL{L2} and using \eqref{tail} and
\eqref{l2b}, for any $r>0$ and assuming $N>0$ (otherwise the
probability is 0),
\begin{equation*}
  \begin{split}
\P\bigpar{T_n\hk=\tau}
&=
\frac{\P\bigpar{\cT\hk=\tau,\,|\cT|=n}}{\P(|\cT|=n)}
=\frac{\P(\cT\hk=\tau)\P(W_m=N)}{\P(|\cT|=n)}
\\& 
\le \CC n\qc\P(\cT\hk=\tau)m N\qcw e^{-\cclb m^2/N} 
\\& 
\le \CC(r) n\qc\P(\cT\hk=\tau)m N\qcw (m^2/N)^{-r/2} 
\\& 
= \CCx(r) n\qc N^{r/2-3/2} m^{1-r}\P(\cT\hk=\tau).
  \end{split}
\end{equation*}
If $r\ge3$, this yields, since $N\le n$, the estimate
\begin{equation*}
\P(T_n\hk=\tau)\le \CCx(r) n^{r/2} m^{1-r}\P(\cT\hk=\tau),   
\end{equation*}
and
summing over all $\tau$ of height $k$ with $Z_k(\tau)=m$ we obtain
\begin{equation*}
\P(Z_k(T_n)=m)\le \CCx(r) n^{r/2} m^{1-r}\P(Z_k(\cT)=m).
\end{equation*}
Consequently,
\begin{equation*}
  \begin{split}
\E Z_k(T_n)^r &=\summi m^r \P(Z_k(T_n)=m)
\\&
\le \CCx(r) n^{r/2} \summi m\P(Z_k(\cT)=m)
\\&
= \CCx(r) n^{r/2} \E Z_k(\cT)
= \CCx(r) n^{r/2}.	
  \end{split}
\end{equation*}
This proves the result for $r\ge3$, and the result for $0<r<3$ follows
by Lyapounov's (or H\"older's) inequality.
\end{proof}

\begin{lemma}
  \label{L1b}
For all $k\ge1$ and $n\ge1$, $\E Z_k(T_n)\le \CC(k\bmin\sqrt n)
\CCdef\CCLi$.
Equivalently, 
for all $k\ge0$ and $n\ge1$, $\E Z_k(T_n)\le \CC((k+1)\bmin\sqrt n)
\CCdef\CCLia$.
\end{lemma}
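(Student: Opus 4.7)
The plan is to combine two bounds already in hand. The first is \eqref{wk}, recalled in the introduction from \cite[Theorem 1.13]{SJ167}: it supplies a constant $C'$ with $\E Z_k(T_n)\le C'k$ uniformly for $k\ge1$ and $n\ge1$. The second is \refL{L1a} specialized to $r=1$, which yields $\E Z_k(T_n)\le \CCr(1)\sqrt n$ for every $k\ge0$ and $n\ge1$. Taking $\CCLi$ to be the larger of these two constants and then selecting, for each $(k,n)$, the smaller of the two upper bounds gives $\E Z_k(T_n)\le \CCLi(k\bmin\sqrt n)$ for all $k\ge1$ and $n\ge1$, which is the first form of the lemma.

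For the reformulation that extends to $k=0$, observe that $Z_0(T_n)\equiv 1$, while $(0+1)\bmin\sqrt n = 1$ whenever $n\ge1$; so the inequality at $k=0$ holds with any constant $\ge 1$. For $k\ge1$, the elementary sandwich
\begin{equation*}
k\bmin\sqrt n \;\le\; (k+1)\bmin\sqrt n \;\le\; 2\,(k\bmin\sqrt n)
\end{equation*}
shows that the two versions of the estimate differ only by a factor of $2$ in the constant, so each implies the other after adjusting $\CCLia$ relative to $\CCLi$.

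There is no substantive obstacle here: both ingredients are already proved, and the remaining work is bookkeeping. The only minor point to verify is that $r=1$ is admissible in \refL{L1a}, which it is, since that lemma is stated for every $r>0$ (the range $0<r<3$ is obtained from the $r=3$ case by Lyapounov's inequality). Conceptually, the $\sqrt n$ bound from \refL{L1a} dominates when $k\gtrsim\sqrt n$ and the $k$ bound from \eqref{wk} dominates when $k\lesssim\sqrt n$, and taking their minimum is exactly what is asserted.
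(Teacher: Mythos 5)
Your proposal is correct and matches the paper's own proof: the paper likewise combines the $O(k)$ bound of \eqref{wk} from \cite[Theorem 1.13]{SJ167} with the $O(\sqrt n)$ bound from \refL{L1a} (the case $r=1$). The bookkeeping for $k=0$ and the equivalence of the two formulations is routine and handled as you describe.
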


\begin{proof}
  The estimate $\E Z_k(T_n)\le \CC k$ is \eqref{wk}, which is proved in 
\cite[Theorem 1.13]{SJ167}. 
The estimate $\E Z_k(T_n)\le \CC\sqrt n$ is proved in \refL{L1a}.
\end{proof}

\begin{remark} 
The estimate $\E Z_k(T_n)\le \CC\sqrt n$ was proved by
Drmota and Gittenberger \cite{DrmotaG:width}, assuming that $\xi$   
has an exponential moment; 
in fact, they then
prove the stronger bound 
$\E Z_k(T_n)\le \CC\sqrt n \exp(-\cc k/\sqrt n)$.
The bound in \refL{L1b} can be further improved to 
$\E Z_k(T_n)\le \CC k \exp(-\cc k^2/ n)$, but we do not know a reference for
this estimate. (Details may appear elsewhere.)  
\end{remark}

\begin{remark} 
  Note that \refL{L1a} yields an estimate $O(n^{r/2})$ of the $r$th moment of
  $Z_k(T_n)$ for an arbitrary $r$ assuming only a second moment of $\xi$.
This is in contrast to the estimate \eqref{wk}, where the
  corresponding estimate $\E Z_k(T_n)^r = O(k^r)$ is valid 
(for integer $r\ge1$ at least) 
if $\xi$ has  a finite $r+1$:th moment, but not otherwise (not even
  for a fixed $k\ge2$); 
one direction is by Theorem 1.13 in \cite{SJ167}, and the converse
  follows from the discussion after Lemma  2.1 in \cite{SJ167}.
\end{remark}

\begin{proof}[Proof of \refT{TQ}]
We have 
$Q_k (T_n)=Q'_k (T_n)+Z_k (T_n)$, and $\E Z_k (T_n)\le \CCLi k$ by
\refL{L1b},
so it suffices to show that $\E Q'_k (T_n)\le \CC k\sqrt n$.

We use \eqref{qkt}, condition on $|\cT|=n$ and take
expectations. Using the symmetry and recalling that
$\cT_1,\dots,\cT_\xi$ are independent and 
$(\cT_i\mid |\cT_i|=n_i)\eqd(\cT\mid |\cT|=n_i)\eqd T_{n_i}$ for any
$n_i$,
we obtain,
with $p_\ell\=\P(\xi=\ell)$ and $q_m\=\P(|\cT|=m)$,
\begin{equation*}
  \begin{split}
\EXP \left\{ Q'_k (T_n) \right\}
&=
\frac{ \EXP \left\{ \IND{\xi \ge 2, |\cT|=n } \sum_{1 \le r<s \le \xi}
  \sum_{j=0}^{k-2} Z_{j} (\cT_r) Z_{k-2-j} (\cT_s)  \right\}} 
{ \PROB \{ |\cT| = n \} } \\
&=
\frac{ \EXP \left\{ \IND{|\cT|=n }\binom{\xi}2 \sum_{j=0}^{k-2} Z_{j}
  (\cT_1) Z_{k-2-j} (\cT_2)  \right\}} 
{\PROB \{ |\cT| = n \} } \\
&=
q_n\qw \sum_{\ell=2}^\infty p_\ell \binom{\ell}2 
           \sum_{n_1,n_2\ge1}
	   q_{n_1} q_{n_2}
\P\lrpar{\sum_{i=3}^\ell|\cT_i|=n-1-n_1-n_2}
\\&\qquad\qquad\times
	   \sum_{j=0}^{k-2}
	   \EXP \left\{ Z_{j}(T_{n_1}) \right\}
	   \EXP \left\{ Z_{k-2-j}(T_{n_2})  \right\}.
  \end{split}
\end{equation*}
We begin with the inner sum over $j$, $\sa$ say.
By symmetry, we consider only $n_1\le n_2$, and then we obtain from
\refL{L1b} the estimates
$\E Z_{k-2-j}(T_{n_2})\le \CCLia((k-1-j)\bmin n_2\qq)\le \CCLia(k\bmin n_2\qq)$
and
\begin{equation*}
  \sum_{j=0}^{k-2}  \EXP \left\{ Z_{j}(T_{n_1}) \right\}
\le 
\begin{cases}
  \sum_{j=0}^{k-2}  \CCLia(j+1)
\le
\CCLia k^2
, 
\\
\E\sum_{j=0}^{\infty} Z_{j}(T_{n_1}) =n_1. 
\end{cases}
\end{equation*}
Hence
\begin{equation}\label{sa}
  \sa \le \CC(k^2\bmin n_1)(k\bmin n_2\qq).
\end{equation}

Let $m\=n_1+n_2$ and sum over $n_1,n_2$ with a given sum $m$. We have
by \eqref{sa} and \eqref{tail},
\begin{equation}\label{sbm}
  \begin{split}
\sbm&:=  \sum_{n_1+n_2=m} q_{n_1}q_{n_2}\sa
\\&
\le
2\sum_{n_1=1}^{m/2} q_{n_1}q_{m-n_1}\CCx(k^2\bmin n_1)(k\bmin (m-n_1)\qq)
\\
&\le
\CC\sum_{n_1=1}^{m/2} n_1\qcw (m-n_1)\qcw(k^2\bmin n_1)
  (k\bmin (m-n_1)\qq)
\\
&\le
\CC\frac{k\bmin m\qq}{m\qc} \sum_{n_1=1}^{m/2} \frac{k^2\bmin n_1}{n_1\qc}
\\
&\le
\CC\frac{k\bmin m\qq}{m\qc} (k\bmin m\qq)
=
\CCx\frac{k^2\bmin m}{m\qc}.
  \end{split}
\end{equation}
We define further
\begin{equation}\label{scl}
\scl\=\sum_{m=2}^{n-1} \sbm \P\Bigpar{\sum_{i=3}^\ell|\cT_i|=n-1-m};
\end{equation}
thus
\begin{equation}\label{qg}
  \E Q_k'(T_n) = 
q_n\qw \sum_{\ell=2}^\infty p_\ell \binom{\ell}2 \scl
\le
\CC n\qc \sum_{\ell=2}^\infty p_\ell {\ell}^2 \scl.
\end{equation}
We will show that $\scl\le \CC k/n$, uniformly in $\ell\ge2$, and the result
follows by \eqref{qg}, recalling that $\sum_\ell p_\ell\ell^2=\E\xi^2<\infty$.
(The proof can be simplified in the case $\E\xi^3<\infty$, when it
suffices to show that $\scl\le \CC k\ell/n$.)

First, if $\ell=2$, the only non-zero term in \eqref{scl} is for
$m=n-1$, which yields, by \eqref{sbm},
\begin{equation*}
  \scx2=\sbx 
\le 
\CC\frac{k^2\bmin n}{n\qc}
\le 
\CCx\frac{k\sqrt n}{n\qc}.
\end{equation*}

For $\ell>2$, we split the sum in \eqref{scl} into two parts, with
$m\le n/2$ and $m>n/2$. We have, by \eqref{sbm},  
\begin{equation*}
  \begin{split}
  \sum_{m=n/2}^{n-1} 
\sbm \P\Bigpar{\sum_{i=3}^\ell|\cT_i|=n-1-m}
&\le
\CC  \frac{k^2\bmin n}{n\qc} \P\Bigpar{\sum_{i=3}^\ell|\cT_i|\le n/2}
\\
\le
\CCx  \frac{k^2\bmin n}{n\qc} 
\le \CCx \frac k n.
  \end{split}
\end{equation*}
Similarly, using
\eqref{sbm} and \eqref{l2c} (with $\ell$ replaced by $\ell-2$),
\begin{equation*}
  \begin{split}
  \sum_{m=1}^{n/2} 
\sbm \P\Bigpar{\sum_{i=3}^\ell|\cT_i|=n-1-m}
&\le
\CC   \sum_{m=1}^{n/2}  \frac{k^2\bmin m}{m\qc}\cdot \frac 1n
\\
&\le
\frac \CCx n   \sum_{m=1}^{\infty}  \frac{k^2\bmin m}{m\qc} 
\le \CC \frac k n.
  \end{split}
\end{equation*}
Thus $\scl\le \CC k/n$, and the theorem follows by \eqref{qg}.
\end{proof}

\begin{proof}[Proof of Theorems \ref{TP1} and \refT{T1}]

Aldous \cite{Aldous:fringe} has studied the behavior of a random subtree
$\htn$ in a conditional Galton--Watson tree $\tn$.
In particular, he has the following identity
\cite[p.~242]{Aldous:fringe}, for any fixed ordered tree $\tau$ of
order at most $n$ (provided that the probabilities 
in the denominators are nonzero):
$$
\frac{\PROB \{ \hT_n = \tau \} }{ \PROB \{ \cT = \tau \} }
=
\frac{(n-|\tau|+1) \PROB \{ |\cT| = n - |\tau| +1 \} } 
{ n \PROB \{ |\cT| = n \} }
\cdot
\frac{\gamma }{ p_0},
$$
where $\gamma$ is the expected proportion of leaves in $\hT_{n-|\tau|+1}$
and $p_0 = \PROB \{ \xi = 0 \}$.
We will simply bound $\gamma \le 1$, but it is well-known that as
$n-|\tau|+1 \to \infty$, $\gamma \to p_0$,
see \eg{} Kolchin \cite[Theorem 2.3.1, p.~113]{Kolchin}.
Thus, using the well-known tail estimate \eqref{tail},
for all (permitted) $n$ and $\tau$
\begin{equation*}
  \begin{split}
\frac{\PROB \{ \hT_n = \tau \} }{ \PROB \{ \cT = \tau \} }
&\le  \CC
\frac{(n-|\tau|+1) \PROB \{ |\cT| = n - |\tau| +1 \} }
 { n \PROB\{|\cT| = n \} } 
\le \CC\CCdef\CCb  \sqrt\frac{ n }{ n-|\tau|+1} . 
  \end{split}
\end{equation*}
Hence, 
\begin{equation*}
  \begin{split}
\EXP \{ Q_k (\hT_n) \} 
&= \sum_{\tau} Q_k (\tau) \PROB \{ \hT_n = \tau \} \\
&\le \CCb\sum_{\tau} 
Q_k (\tau) \sqrt\frac{ n }{ n-|\tau|+1} \PROB \{ \cT = \tau \}\\ 
&=
\CCb\E \lrpar{Q_k (\cT) \sqrt\frac{ n }{ n-|\cT|+1}\,} \\
&\le \CC \EXP \{ Q_k (\cT) \} + \CCb \sum_{n \ge \ell > n/2} \sqrt\frac{ n }{
  n-\ell+1} \, \EXP \left\{ \IND{|\cT|=\ell} Q_k (\cT) \right\}.
  \end{split}
\end{equation*}
We have $\E\xcpar{Q_k(\cT)}\le\CC k\CCdef\CCk$ by \eqref{qk}, and, 
using \eqref{tail} and \refT{TQ},
\begin{equation*}
  \EXP \left\{ \IND{|\cT|=\ell} Q_k (\cT) \right\}
=\P\xcpar{|\cT|=\ell}  \EXP \left\{  Q_k (T_\ell) \right\}
\le \CC \ell\qcw k\ell\qq 
=\CCx k/\ell.
\end{equation*}
Consequently,
\begin{equation*}
  \begin{split}
\EXP \{ Q_k (\hT_n) \}
&\le \CCk k+\CC  \sum_{n \ge \ell > n/2}  \frac{k }{ \ell} \, 
\sqrt\frac{ n }{ n-\ell+1} \\
&\le \CCk k+\CC \frac kn \sum_{j=1}^n  \sqrt\frac{n}{ j} \\
&\le \CC k. \\
  \end{split}
\end{equation*}
This proves \refT{TP1}, which by the argument at the beginning of the
section yields \refT{T1}.
\end{proof}

\section{Proof of \refL{L0} and \refT{T2}} \label{Sl2}

Denote the \lhs{} of \eqref{l2} by $\psint$. 
Since 
$
\sum_j X(j;\tn)e^{\ii jt} = \sumvtn e^{\ii t L_v}
$,
we have
\begin{equation}
  \label{b1}
\psint
=n\qww
\E\,\biggabs{\sumvtn e^{\ii t L_v}}^2
=n\qww
\E\sumvwtn e^{\ii t (L_v-L_w)}.
\end{equation}
Condition on $T_n$ and consider two vertices $v$ and $w$ in $\tn$. If
$\vw$ is the last common ancestor of $v$ and $w$, then $L_v-L_{\vw}$
and $L_w-L_{\vw}$ are (conditionally, given $\tn$) independent sums of
$d(v,\vw)$ and $d(w,\vw)$ copies of $\eta$,
respectively. Consequently, letting $\gfeta(t)\=\E e^{\ii t\eta}$ be the 
characteristic function of $\eta$,
\begin{equation*}
  \begin{split}
\E \bigpar{e^{\ii t (L_v-L_w)}\mid\tn}
&=
\E \bigpar{e^{\ii t (L_v-L_{\vw})}\mid\tn}
\E \bigpar{e^{-\ii t (L_w-L_{\vw})}\mid\tn}
\\&
=\gfeta(t)^{d(v,\vw)}\,\overline{\gfeta(t)}^{d(w,\vw)}.
  \end{split}
\end{equation*}
Hence, by \eqref{b1} and \eqref{hn},
\begin{equation*}
  \psint
= n\qww \E\sumvwtn\gfeta(t)^{d(v,\vw)}\overline{\gfeta(t)}^{d(w,\vw)}
=n\qww \hn\bigpar{\gfeta(t),\overline{\gfeta(t)}}
\end{equation*}
and \refT{Tgen2} yields
\begin{equation}
  \label{b2}
\psint \le \CCgenii n\qw|1-\gfeta(t)|\qww.
\end{equation}

Since $\E \eta=0$ and $\E \eta^2=\gssy<\infty$, 
we have
$\gfeta(t)=\exp\bigpar{-\tfrac12\gssy t^2+o(t^2)}$
for small $|t|$; moreover, since $\eta$ has span 1, $\gfeta(t)\neq1$ for
$0<|t|\le\pi$. It follows
that $\psi(t)\=(1-\gfeta(t))/t^2$ is a continuous non-zero function on
$[-\pi,\pi]$ (with  $\psi(0)\=\frac12\gssy$); hence, by compactness,
$|\psi(t)|\ge \cc$ for some $\cc>0$, and thus
\begin{equation*}
  |1-\gfeta(t)|\ge \ccx t^2,
\qquad |t|\le\pi.
\end{equation*}

It now follows from \eqref{b2}, and the obvious fact that
$\psint\le1$, that
\begin{equation*}
  (1+nt^4)\psint
\le
  1+nt^4\psint
\le
1+ \CCgenii \frac{t^4}{|1-\gfeta(t)|^2}
\le \CC.
\end{equation*}

This proves \refL{L0}, which as remarked in \refS{S:intro} implies
\refT{T2} by \cite[Remark 3.7]{SJ185}.

\section{Proof of \refT{Tgen2}} \label{Sgen2}

We use some further generating functions. 
Recall that $\cT$ is the (unconditioned) \GWt{}
with offspring distribution $\xi$, and define
\begin{align*}
  \Phi(z) &\= \E z^\xi, \\
  F(z) &\= \E z^{|\cT|}, \\
  G(z,x) &\= \E \Bigpar{z^{|\cT|}\sumvt x^{d(v,\oo)}}, \\
  H(z,x,y) &\= \E \Bigpar{z^{|\cT|}\sumvwt x^{d(v,\vw)}y^{d(w,\vw)}}
  =\sum_{n=1}^\infty \P(|\cT|=n)\hn(x,y)z^n.
\end{align*}
These functions are defined and analytic at least for $|z|,|x|,|y|<1$.

Let us condition on the degree $\doo$ of the root of $\cT$, recalling
that $\doo\eqd\xi$. 
If $\doo=\ell$, then $\cT$ has $\ell$ subtrees $\cT_1,\dots,\cT_\ell$
at the root $\oo$, and conditioned on $\doo=\ell$, these are independent
and with the same distribution as $\cT$;
we denote their roots (the neighbours of
$\oo$), by $\oo_1,\dots,\oo_\ell$.

Assume $\doo=\ell$, and let $|z|,|x|,|y|<1$.
First, $|\cT|=1+\sumil|\cT_i|$ and thus 
$z^{|\cT|}=z\prodil z^{|\cT_i|}$. Taking the expectation, we obtain, as
is well-known, first
\begin{equation*}
\E\bigpar{z^{|\cT|}\mid\doo=\ell}
=
z\E \prodil z^{|\cT_i|}
= z F(z)^\ell,
\end{equation*}
and then
\begin{equation*}
F(z)=\E\bigpar{z^{|\cT|}}
= z \suml \P(\xi=\ell)F(z)^\ell
=z\phifz.
\end{equation*}

Similarly, separating the cases
$v\in\cT_i$, $i=1,\dots,\ell$, and $v=\oo$, 
\begin{equation*}
  \sumvt x^{d(v,\oo)}
=
\sumil\sum_{v\in\cT_i} x^{d(v,\oo_i)+1} +1.
\end{equation*}
Hence,
\begin{align*}
\E\bigpar{z^{|\cT|}\sumvt x^{d(v,\oo)}\mid\doo=\ell}
&=
\E\sumil z z^{|\cT_i|}\sum_{v\in\cT_i} x^{d(v,\oo_i)+1} 
  \prod_{j\neq i} z^{|\cT_j|}
 +\E \Bigpar{z\prodil z^{|\cT_i|}}
\\
&= \ell zx G(z,x)F(z)^{\ell-1}+ z F(z)^\ell
\end{align*}
and
\begin{equation*}
  G(z,x)=\sum_{\ell=0}^\infty\P(\xi=\ell)\ell zxG(z,x)F(z)^{\ell-1}+F(z)
=zx\Phi'(F(z))G(z,x)+F(z)
\end{equation*}
which gives
\begin{equation}\label{yg}
  G(z,x)=\frac{F(z)}{1-zx\Phi'(F(z))}.
\end{equation}

Similarly,
\begin{align*}
\E\Bigpar{z^{|\cT|}&\sumvwt x^{d(v,\vw)}y^{d(w,\vw)}\mid\doo=\ell}
=
\\
&\E\sumil z z^{|\cT_i|}\sum_{v,w\in\cT_i} x^{d(v,\vw)}y^{d(w,\vw)}
  \prod_{j\neq i} z^{|\cT_j|}
\\&
\quad+
\E\sum_{i\neq j} z z^{|\cT_i|}\sum_{v\in\cT_i} x^{d(v,\oo_i)+1} 
  z^{|\cT_j|}\sum_{w\in\cT_j} y^{d(w,\oo_j)+1} 
  \prod_{k\neq i,j} z^{|\cT_k|}
\\&
\quad+
\E\sumil z z^{|\cT_i|}\sum_{v\in\cT_i} x^{d(v,\oo_i)+1} 
  \prod_{k\neq i} z^{|\cT_k|}
\\&
\quad+
\E\sumjl z z^{|\cT_j|}\sum_{w\in\cT_j} y^{d(w,\oo_j)+1} 
  \prod_{k\neq j} z^{|\cT_k|}
 +\E \Bigpar{z\prodil z^{|\cT_i|}}
\end{align*}
leading to
\begin{multline*}
  H(z,x,y)
=
z\Phi'(F(z))H(z,x,y)
+
zxy\Phi''(F(z))G(z,x)G(z,y)\\
+
zx\Phi'(F(z))G(z,x)
+
zy\Phi'(F(z))G(z,y)
+F(z)
\end{multline*}
which gives
\begin{multline}
\label{yh}
  H(z,x,y)
=\\
\frac{zxy\Phi''(F(z))G(z,x)G(z,y)
+
z\Phi'(F(z))\bigpar{xG(z,x)+ yG(z,y)}
+F(z)}
{1-z\Phi'(F(z))}
\end{multline}

Assume now for simplicity that $\xi$ has span 1. (The case when the
span is $d>1$ is treated similarly with the standard modification that
we have to give special treatment to neighbourhoods of the $d$:th unit
roots.)
Then, by \cite[Lemma A.2]{SJ167},
for some $\gd>0$ and $\gb\le\pi/4$, $F$ extends to an analytic function
in $\gdbd$ with $|F(z)|<1$ for $z\in\gdbd$ and
\begin{equation}
  \label{a5}
F(z)=1-\sqrt2\gs\qw\sqrt{1-z}+\oqq,
\qquad
\text{as $z\to1$ with } z\in\gdbd.
  \end{equation}
We will prove the following companion results.
\begin{lemma}\label{Lx}
  If\/ $\xi$ has span $1$, then there exists $\gb,\gd>0$ such that 
$F$ extends to an analytic function in $\gdbd$ and, for some 
$\cc\ccdef\cclxa,\cc\ccdef\ccj>0$,
if $x,z\in\gdbd$, then
\begin{align}\label{sofie}
  |1-z\Phi'(F(z))|&\ge \cclxa|1-z|\qq,
\\
  |1-xz\Phi'(F(z))|&\ge \ccj|1-x|.\label{julie}
\end{align}
Consequently, $G(z,x)$ and $H(z,x,y)$ extend to analytic functions
of $x,y,z\in\gdbd$, and, for 
all  $x,y,z\in\gdbd$,
\begin{align}
  |G(z,x)| &\le \CC|1-x|\qw, \label{erika}
\\
  |H(z,x,y)| &\le \CC|1-z|\qqw|1-x|\qw|1-y|\qw. \label{magnus}
\end{align}
\end{lemma}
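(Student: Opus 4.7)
The plan is to reduce everything to the two scalar bounds \eqref{sofie} and \eqref{julie}; once these are in hand, the analytic extensions of $G$ and $H$ to $\gdbd$ and the bounds \eqref{erika}, \eqref{magnus} follow directly from the closed forms \eqref{yg} and \eqref{yh}, whose denominators are precisely $1-xz\Phi'(F(z))$ and $1-z\Phi'(F(z))$. The numerator quantities $F(z)$, $\Phi'(F(z))$, $\Phi''(F(z))$ are bounded on $\gdbd$, using $|F(z)|<1$ on $\gdbd\setminus\set{1}$ (from \eqref{a5}, after possibly shrinking $\gb,\gd$) and the finiteness of $\Phi''(1)=\gss$ to extend $\Phi''$ continuously across $w=1$.

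For \eqref{sofie} I would combine the expansion \eqref{a5} with the Taylor expansion $\Phi'(w)=1+\gss(w-1)+o(w-1)$ near $w=1$, which is available since $\Phi'(1)=\E\xi=1$ and $\Phi''(1)=\E\xi(\xi-1)=\gss$. Substituting $w=F(z)$ and expanding in $1-z$ gives
\begin{equation*}
1-z\Phi'(F(z)) = \sqrt{2}\,\gs\,\sqrt{1-z}+o\bigpar{|1-z|\qq},
\qquad z\to 1 \text{ in } \gdbd,
\end{equation*}
so that \eqref{sofie} holds in a small neighborhood of $1$. For $z$ bounded away from $1$ in $\gdbd$, $|F(z)|<1$ and the strict monotonicity of $\Phi'$ on $[0,1]$ (using $\P(\xi\ge 2)>0$, which follows from $\E\xi=1$ and $\P(\xi=1)<1$) yield $|z\Phi'(F(z))|<1$; a compactness argument, shrinking $\gb,\gd$ if necessary, then gives a uniform lower bound on $|1-z\Phi'(F(z))|$, which is at least a constant multiple of $|1-z|\qq$ since $|1-z|$ is bounded on $\gdbd$.

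The main obstacle is \eqref{julie}. Set $u(z):=z\Phi'(F(z))$ and factor $1-xu(z)=-u(z)\bigpar{x-1/u(z)}$. For $z$ bounded away from $1$, the previous paragraph gives $|u(z)|\le 1-\eps_1$, hence $|1/u(z)|$ is bounded away from the bounded set $\overline{\gdbd}$, which together with $|1-x|\le 2+\gd$ yields \eqref{julie}. For $z$ near $1$, the expansion above gives
\begin{equation*}
1/u(z)-1 \;\sim\; \sqrt{2}\,\gs\,\sqrt{1-z},
\end{equation*}
so $\arg\bigpar{1/u(z)-1}$ lies, to leading order, in the cone $(-\pi/4-\gb/2,\pi/4+\gb/2)$, whereas the defining condition $|\arg(x-1)|>\pi/2-\gb$ places $\arg(x-1)$ in the complementary sector. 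Choosing $\gb<\pi/6$, these two angular ranges are separated by at least $\theta_0:=\pi/4-3\gb/2>0$. The elementary planar estimate
\begin{equation*}
|a-b|^2=|a|^2+|b|^2-2|a||b|\cos(\arg a-\arg b)\ge |a|^2\sin^2(\theta_0),
\end{equation*}
valid when $|\arg a-\arg b|\ge\theta_0$ (by minimizing in $t:=|b|/|a|\ge 0$ at $t=\cos\theta_0$), applied to $a=x-1$ and $b=1/u(z)-1$, gives $|x-1/u(z)|\ge\sin(\theta_0)\,|x-1|$; combined with the uniform lower bound on $|u(z)|$ this yields \eqref{julie}.

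Finally, \eqref{erika} is immediate from \eqref{yg} and \eqref{julie}, since the numerator $F(z)$ is bounded. For \eqref{magnus} I would bound each of the three summands in the numerator of \eqref{yh} by a constant multiple of $(|1-x||1-y|)\qw$ (using \eqref{erika} for the two $G$-factors and bounding $1$ trivially since $|1-x||1-y|$ is bounded on $\gdbd\times\gdbd$), then divide by $|1-z\Phi'(F(z))|$, which is bounded below by a constant multiple of $|1-z|\qq$ by \eqref{sofie}. The analytic extensions of $G$ and $H$ to $\gdbd$ in each variable are automatic from \eqref{yg}, \eqref{yh} once \eqref{sofie} and \eqref{julie} guarantee that neither denominator vanishes.
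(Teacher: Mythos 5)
Your proposal is correct and follows essentially the same route as the paper: expand $z\Phi'(F(z))=1-\sqrt2\gs\sqrt{1-z}+\oqq$ near $z=1$ to get \eqref{sofie} and the angular information, handle $z$ bounded away from $1$ by a compactness argument after shrinking $\gb,\gd$, and then read \eqref{erika} and \eqref{magnus} off the closed forms \eqref{yg} and \eqref{yh}. Two remarks. First, your treatment of \eqref{julie} for $z$ near $1$ is a mild simplification: since \emph{every} $x\in\gdbd$ satisfies $|\arg(x-1)|>\pi/2-\gb$, your comparison of $\arg(x-1)$ with $\arg\bigpar{1/u(z)-1}$ via the law of cosines covers all $x\in\gdbd$ at once, whereas the paper treats $x$ near $1$ by the sine theorem and $x$ away from $1$ by a separate compactness argument with the sets $A$ and $B_\rho$. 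Second, your case ``$z$ bounded away from $1$'' needs a small patch: the factorization $1-xu(z)=-u(z)\bigpar{x-1/u(z)}$ together with ``$1/u(z)$ is far from $\overline{\gdbd}$'' does not by itself yield \eqref{julie}, because $|u(z)|$ is not bounded below on $\gdbd$ (for instance $u(0)=0$, so $1/u(z)$ need not even be defined), and your lower bound on $|x-1/u(z)|$ gets multiplied by this possibly vanishing factor. The fix is immediate and stays within your framework: in this regime $|x|\,|u(z)|\le(1+\gd)(1-\eps_1)<1$ after shrinking $\gd$, so $|1-xu(z)|\ge 1-(1+\gd)(1-\eps_1)>0$ directly, and dividing by the bounded quantity $|1-x|$ gives \eqref{julie}; this is in effect the paper's $C_\eps$ argument. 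Note also that the global bound $|F(z)|<1$ on $\gdbd$, which you need to bound the numerators and $\Phi''(F(z))$, comes from \cite[Lemma A.2]{SJ167} rather than from \eqref{a5} alone, which controls $F$ only near $z=1$.
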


Standard singularity analysis 
\cite[Lemma IX.2]{FS}
applied to \eqref{magnus} yields
\begin{align*}
|\P(|\cT|=n)\hn(x,y)|
&
\le \CC n \qqw|1-x|\qw|1-y|\qw,
\qquad
x,y\in\gdbd,
\end{align*}
which proves \refT{Tgen2} because, as is well known, a singularity
analysis of \eqref{a5} yields 
\begin{align*}
\P(|\cT|=n)\sim \cc n^{-3/2}.
\end{align*}
It thus remains only to prove \refL{Lx}.

\begin{proof}[Proof of \refL{Lx}]
Since $\E\xi^2<\infty$, $\Phi'$ and $\Phi''$ extend to continuous
functions on the closed unit disc with $\Phi'(1)=\E\xi=1$ and
$\Phi''(1)=\E\xi(\xi-1)=\gss$. Hence, \eqref{a5} yields, for $z\in\gdbd$,
\begin{equation*}
  \begin{split}
\Phi'(F(z))&=\Phi'(1)+\Phi''(1)\bigpar{F(z)-1}+o(|F(z)-1|)
\\&
=1-\sqrt2\gs\sqrt{1-z}+\oqq	
  \end{split}
  \end{equation*}
and
\begin{equation}
  \label{david}
z\Phi'(F(z))
=\Phi'(F(z))+O(|z-1|)
=1-\sqrt2\gs\sqrt{1-z}+\oqq.
  \end{equation}
Let $\be\=\set{z:|z-1|<\eps}$, and take $\gb<\pi/4$.
Since $z\in\ogdbdx$ entails $|\arg(1-z)|\le\pi/2+\gb$ and thus 
$|\arg\sqrt{1-z}|\le\pi/4+\gb/2$, it follows from \eqref{david} that,
for some small $\eps>0$, 
if $z\in\overline{\gdbde}$ with $z\neq1$,
then 
\eqref{sofie} holds, 
$\bigl|{z\Phi'(F(z))-1}\bigr|=O(\eps\qq)$,
\begin{equation}
  \label{emma}
  \begin{split}
\bigl|\arg\bigpar{z\Phi'(F(z))-1}\bigr|
&
>|\arg(-\sqrt{1-z})|-\gb/2
\\&
\ge
\pi-(\pi/4+\gb/2)-\gb/2
=3\pi/4-\gb,	
  \end{split}
\end{equation}
  and consequently, since $3\pi/4-\gb>\pi/2$,
if $\eps$ is small enough,
  \begin{equation}
	\label{samuel}
|z\Phi'(F(z))|<1.
  \end{equation}

Similarly, if $x\in\gdbd$, then $|\arg(1-x)|<\pi/2+\gb$ and
\begin{equation*}
  x\qw=\bigpar{1-(1-x)}\qw = 1+(1-x)+o(|1-x|),
\qquad x\to1,
\end{equation*}
so if $\eps>0$ is small enough,
then, for $x\in\gdbd\cap\be$,
\begin{equation}\label{jesper}
  |\arg(x\qw-1)|<\pi/2+2\gb.
\end{equation}
If we choose $\gb\le\pi/16$, it follows from \eqref{emma} and
\eqref{jesper} that the triangle with vertices in $1$, $x\qw$ and 
$z\Phi'(F(z))$ has an angle at least $\pi/4-3\gb\ge\pi/16$ at 1, 
and thus by elementary
trigonometry (the sine theorem), 
\begin{equation*}
|x\qw- z\Phi'(F(z))| \ge \cc |x\qw-1|,
\end{equation*}
and so \eqref{julie} holds, when $z,x\in\gdbd\cap\be$, 
provided $\gb,\gd,\eps$ are small enough.

It remains to treat the case when $x$ or $z$ does not belong to $\be$,
\ie, $|x-1|\ge\eps$ or $|z-1|\ge\eps$. We do this by compactness
arguments.

First, let 
\begin{align*}
  A&\=\set{z\Phi'(F(z)):z\in\gdbd\cap\be}
\\
\br&\=\set{x\qw:x\in\overline{\gdbr}\setminus\be,\, |x|\ge1/2}.
\end{align*}
Then $B\=\bigcap_{\rho>0}\br
\subset\set{\zeta:|\zeta|\ge1}\setminus\set1$,
and it follows from \eqref{samuel} that 
$\ba\cap B=\emptyset$.
Since $\ba$ and all $\br$ are compact, it follows that
$\ba\cap\br=\emptyset$ for some $\rho>0$, and thus, if
$z\in\gdbd\cap\be$ and $x\in\gdbr\setminus\be$ with $|x|\ge1/2$, then
$|x\qw-z\Phi'(F(z))|\ge \cc$ for some $\ccx>0$, which implies \eqref{julie}
for such $z$ and $x$. 
Moreover, if $z\in\gdbde$ and $|x|<1/2$, \eqref{samuel} shows that 
$|1-xz\Phi'(F(z))|\ge 1-|x|\ge1/2$, so \eqref{julie} then holds if 
$\ccj\le1/3$.

Finally, if $z\in\gdbd$, then
$|F(z)|<1$  \cite[Lemma A.2]{SJ167} as stated above,
and thus $|\Phi'(F(z))|<1$.
If $0<\gb_1<\gb$ and $0<\gd_1<\gd$, then
$\overline{\gdbdi}\subset\gdbd\cup\set1$, and thus by compactness
\begin{equation*}
  \ce\=\sup\bigset{|\Phi'(F(z))|:z\in\overline{\gdbdi}\setminus\be}<1.
\end{equation*}
Consequently, if $\gd_2\le\gd_1$ is small enough and $x,z\in\gdbdii$ with
$|z-1|\ge\eps$, then
\begin{equation*}
  |xz\Phi'(F(z))|\le(1+\gd_2)^2\ce<1.
\end{equation*}
Hence \eqref{julie} holds in this case too for some $\ccj>0$, and
similarly \eqref{sofie} holds for $z\in\gdbdii\setminus\be$.

This completes the proof of \eqref{sofie} and \eqref{julie}, for some new
$\gb,\gd>0$
(\viz, $\gb_1$ and $\min(\gd_2,\rho)$).
$G(z,x)$ now can be defined for all $x,z\in\gdbd$ by \eqref{yg}, 
and \eqref{erika}
holds by \eqref{julie}.
Similarly, 
$H(z,x,y)$ can be defined for all $x,y,z\in\gdbd$ by \eqref{yh}, 
and \eqref{magnus} holds by \eqref{sofie}, \eqref{erika}, and the fact
that $\Phi'$ and $\Phi''$ are bounded on the unit disc. (Recall that
$|F(z)|<1$ for $z\in\gdbd$.)

This completes the proof of \refL{Lx}, and thus of \refT{Tgen2} and of
all results in this paper.
\end{proof}

\newcommand\AAP{\emph{Adv. Appl. Probab.} }
\newcommand\JAP{\emph{J. Appl. Probab.} }
\newcommand\JAMS{\emph{J. \AMS} }
\newcommand\MAMS{\emph{Memoirs \AMS} }
\newcommand\PAMS{\emph{Proc. \AMS} }
\newcommand\TAMS{\emph{Trans. \AMS} }
\newcommand\AnnMS{\emph{Ann. Math. Statist.} }
\newcommand\AnnAP{\emph{Ann. Appl. Probab.} }
\newcommand\AnnPr{\emph{Ann. Probab.} }
\newcommand\CPC{\emph{Combin. Probab. Comput.} }
\newcommand\JMAA{\emph{J. Math. Anal. Appl.} }
\newcommand\RSA{\emph{Random Struct. Alg.} }
\newcommand\ZW{\emph{Z. Wahrsch. Verw. Gebiete} }
\newcommand\DMTCS{\jour{Discr. Math. Theor. Comput. Sci.} }

\newcommand\AMS{Amer. Math. Soc.}
\newcommand\Springer{Springer-Verlag}
\newcommand\Wiley{Wiley}

\newcommand\vol{\textbf}
\newcommand\jour{\emph}
\newcommand\book{\emph}
\newcommand\inbook{\emph}
\def\no#1#2,{\unskip#2, no. #1,} 
\newcommand\toappear{\unskip, to appear}

\newcommand\webcite[1]{\hfil\penalty0\texttt{\def~{\~{}}#1}\hfill\hfill}
\newcommand\webcitesvante{\webcite{http://www.math.uu.se/\~{}svante/papers/}}
\newcommand\arxiv[1]{\webcite{arXiv:#1}}

\def\nobibitem#1\par{}


\begin{thebibliography}{99}


\bibitem{AldousII}
D. Aldous,
The continuum random tree II: an overview.
\emph{Stochastic Analysis (Proc., Durham, 1990)}, 23--70,
London
Math. Soc. Lecture Note Ser. 167, Cambridge Univ. Press,
Cambridge, 1991.

\bibitem{Aldous:fringe}
D.~Aldous,
Asymptotic fringe distributions for general families of random trees.
\AnnAP \vol1, no. 2, 228--266  (1991).

\bibitem{AldousISE} 
D. Aldous,
Tree-based models for random distribution of mass.
\jour{J. Statist. Phys.} \vol{73}, 625--641 (1993).

\bibitem{BM}
M. Bousquet-M\'elou,
Limit laws for embedded trees. Applications to the integrated
superBrownian excursion.
\RSA
\vol{29}, no. 4,  475--523 (2006).

\bibitem{SJ185}
M. Bousquet-M\'elou \& S. Janson,
The density of the ISE and local limit laws for embedded trees. 
\AnnAP
\vol{16}, no. 3, 1597--1632 (2006). 

\bibitem{Devroye}
L. Devroye,
Branching processes and their applications in the analysis of tree
structures and tree algorithms.
\inbook{Probabilistic Methods for Algorithmic Discrete Mathematics},
249--314,
eds. M. Habib et al.,
Algorithms Combin. 16, \Springer, Berlin, 1998.

\bibitem{DrmotaG:width} 
M.~Drmota \& B.~Gittenberger,
The width of Galton--Watson trees conditioned by the size.
\jour{Discrete Mathematics and Theoretical Computer Science},
\vol6, 387--400 (2004).

\bibitem[Dwass(1969)]{Dwass}
M. Dwass, 
The total progeny in a branching process and a related random walk.
\JAP \vol{6} (1969), 682--686.


\bibitem{FS}
P. Flajolet \& R. Sedgewick,
\book{Analytic Combinatorics}.
Cambridge Univ. Press, 2008, to appear.
Web edition (available from the authors' web sites).

\bibitem{SJ167}
S. Janson,
Random cutting and records in deterministic and random trees. 
\RSA \vol{29}, no. 2, 139--179 (2006).  

\bibitem{SJ188}
S. Janson,
Conditioned Galton--Watson trees do not grow.
\inbook{Proceedings, Fourth
Colloquium on Mathematics and Computer Science Algorithms, Trees,
Combinatorics and Probabilities (Nancy, 2006)}, DMTCS Proceedings AG,
331--334 (2006).  

\bibitem{Kolchin}
V.F. Kolchin,
\book{Random Mappings}.
Optimization Software, New York, 1986.

\bibitem{MM}
A. Meir \& J.W. Moon,
On the altitude of nodes in random trees.
\emph{Canad. J. Math.} \vol{30}, 997--1015 (1978).

\bibitem[Pitman(1998)]{Pitman:enum}  
J. Pitman, 
Enumerations of trees and forests related to branching processes and
random walks. 
\emph{Microsurveys in Discrete Probability (Princeton, NJ, 1997)}, 163--180,
DIMACS Ser. Discrete Math. Theoret. Comput. Sci., 41, 
\AMS, Providence, RI, 1998. 



\end{thebibliography}
\end{document}